\newtheorem{theorem}{Theorem}
\newtheorem{lemma}{Lemma}
\theoremstyle{definition}
\newtheorem{example}{Example}
\begin{document}
\bibliographystyle{abbrv}

\title{Symmetric Nonnegative $ 5 \times 5 $ Matrices Realizing Previously Unknown Region}
\author{Oren Spector}
\date{\today}
\maketitle

\begin{abstract}
In this paper we present some symmetric nonnegative $ 5 \times 5 $ matrix families that realize a previously unknown region. We also prove that these and other symmetric nonnegative $ 5 \times 5 $ matrix families are closed under perturbations first presented in~\bibentry{RefWorks:21}.
\end{abstract}

\section{Introduction}

Let $ \sigma = \left( \lambda_1, \lambda_2, \dots, \lambda_n \right) $ be a list of complex numbers. The problem of determining necessary and sufficient conditions for $ \sigma $ to be the eigenvalues of a nonnegative $ n \times n $ matrix is called the nonnegative inverse eigenvalue problem (NIEP). When $ \sigma $ is a list of real numbers the problem of determining necessary and sufficient conditions for $ \sigma $ to be the eigenvalues of a nonnegative $ n \times n $ matrix is called the real nonnegative inverse eigenvalue problem (RNIEP). When $ \sigma $ is a list of real numbers the problem of determining necessary and sufficient conditions for $ \sigma $ to be the eigenvalues of a symmetric nonnegative $ n \times n $ matrix is called the symmetric nonnegative inverse eigenvalue problem (SNIEP). All three problems are currently unsolved for $ n \geq 5 $.

Loewy and London~\cite{RefWorks:39} have solved NIEP for $ n = 3 $ and RNIEP for $ n = 4 $. Moreover, RNIEP and SNIEP are the same for $ n \leq 4 $. This can be seen from papers by Fiedler~\cite{RefWorks:59} and Loewy and London~\cite{RefWorks:39}. However, it has been shown by Johnson et al.~\cite{RefWorks:47} that RNIEP and SNIEP are different in general. More results about the general NIEP, RNIEP and SNIEP can be found in~\cite{RefWorks:46}. Other results about SNIEP for $ n = 5 $ can be found in ~\cite{RefWorks:45,RefWorks:36,RefWorks:16,RefWorks:79}.

Guo~\cite{RefWorks:21} proved the realizability of some real eigenvalue perturbations of a realizable spectrum in NIEP and RNIEP. Laffey~\cite{RefWorks:66} extended this result to some non-real eigenvalue perturbations. Rojo and Soto~\cite{RefWorks:82} extended this result to symmetric nonnegative circulant matrices.

This paper is organized as follows: In Section~\ref{sec:preliminaries} we present some notations and results concerning SNIEP. In Section~\ref{sec:patterns} we present some matrix patterns and their properties. In Section~\ref{sec:unknown_region} we show that these matrix patterns realize a previously unknown region. In Section~\ref{sec:perturbations} we prove Guo-type results for several families of $ 5 \times 5 $ symmetric nonnegative matrices, including those presented in Section~\ref{sec:patterns}. Finally, in Section~\ref{sec:examples} we show some exaples that illustrate our results.

\section{Preliminaries and Notations} \label{sec:preliminaries}

By the Perron-Frobenius Theorem we know that if $ \sigma $ is the spectrum of a nonnegative matrix $ A $ then
\begin{equation}
\rho = \max_{\lambda \in \sigma} \lvert \lambda \rvert \in \sigma. \label{PF condition}
\end{equation}
We call $ \rho $ the Perron eigenvalue of $ A $. Also, by the nonnegativity of $ A $ we have
\begin{equation}
\sum_{\lambda \in \sigma} \lambda \geq 0. \label{trace condition}
\end{equation}

The following result is due to Fiedler \cite{RefWorks:59}, who extended a result due to Suleimanova \cite{RefWorks:44}:
\begin{theorem} \label{th:Suleimanova}
Let $ \lambda_1 \geq 0 \geq \lambda_2 \geq \dots \geq \lambda_n $ and $ \sum_{i=1}^{n} \lambda_i \geq 0 $. Then there exists a symmetric nonnegative $ n \times n $ matrix with a spectrum $ \sigma = \left( \lambda_1, \lambda_2, \dots, \lambda_n \right) $, where $ \lambda_1 $ is its Perron eigenvalue.
\end{theorem}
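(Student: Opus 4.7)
I would prove the theorem by induction on $n$, with the strengthened inductive hypothesis that the realising matrix can be chosen so that its unit Perron eigenvector is \emph{strictly} positive. The base cases $n=1,2$ are handled by explicit construction: $A=(\lambda_1)$ for $n=1$, and for $n=2$ the matrix
$$A=\tfrac{1}{2}\begin{pmatrix}\lambda_1+\lambda_2 & \lambda_1-\lambda_2 \\ \lambda_1-\lambda_2 & \lambda_1+\lambda_2\end{pmatrix}$$
has spectrum $(\lambda_1,\lambda_2)$ and Perron eigenvector $(1,1)^T/\sqrt{2}>0$.

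For the inductive step, I would first dispose of the degenerate case $\lambda_n=0$: the ordering together with $\sum\lambda_i\ge 0$ then forces $\lambda_2=\dots=\lambda_n=0$, and $A=(\lambda_1/n)J$ (with $J$ the all-ones matrix) realises the spectrum with the strictly positive Perron eigenvector $(1,\dots,1)^T/\sqrt{n}$. So assume $\lambda_n<0$, and set $\lambda_1^\ast:=\lambda_1+\lambda_n$. Using the inequality $\lambda_1\geq\sum_{i=2}^{n}|\lambda_i|$ that follows from $\eqref{trace condition}$ applied with $-\lambda_i=|\lambda_i|$, one verifies that $\lambda_1^\ast\geq 0$, that the reduced trace $\lambda_1^\ast+\sum_{i=2}^{n-1}\lambda_i=\sum_{i=1}^{n}\lambda_i$ is still nonnegative, and that $\lambda_1^\ast\geq|\lambda_i|$ for all $2\leq i\leq n-1$. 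Hence the list $(\lambda_1^\ast,\lambda_2,\dots,\lambda_{n-1})$ satisfies the hypotheses at size $n-1$, and by induction is realised by a symmetric nonnegative matrix $B$ with strictly positive unit Perron eigenvector $u$. Setting $\rho:=\sqrt{-\lambda_1\lambda_n}>0$, I would then take
$$A:=\begin{pmatrix} B & \rho\,u \\ \rho\,u^T & 0 \end{pmatrix}.$$
The matrix $A$ is symmetric and, since $u>0$ and $\rho>0$, nonnegative. Extending the eigenvectors of $B$ for $\lambda_2,\dots,\lambda_{n-1}$ by a zero last coordinate yields eigenvectors of $A$ with the same eigenvalues. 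Moreover $A$ preserves the two-dimensional subspace spanned by $(u^T,0)^T$ and $(0,\dots,0,1)^T$, and acts on it as the $2\times 2$ matrix $\left(\begin{smallmatrix}\lambda_1^\ast & \rho \\ \rho & 0\end{smallmatrix}\right)$, which has trace $\lambda_1+\lambda_n$ and determinant $\lambda_1\lambda_n$, hence eigenvalues $\lambda_1$ and $\lambda_n$. The Perron eigenvector of this $2\times 2$ block has both entries strictly positive, so the Perron eigenvector of $A$ is a strictly positive combination of $(u^T,0)^T$ and $(0,\dots,0,1)^T$, closing the induction.

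\textbf{Main obstacle.} The delicate point is the strengthening of the inductive hypothesis to require a strictly positive Perron eigenvector: a general symmetric nonnegative matrix only has a nonnegative one, as block-diagonal realisations show, and without strict positivity of $u$ the off-diagonal block $\rho\,u$ could vanish on some coordinates, which would both endanger the positivity of the Perron eigenvector of $A$ (needed to continue the induction) and break the Perron-Frobenius argument identifying $\lambda_1$ as the spectral radius. The remaining steps are elementary: the inequalities on $\lambda_1^\ast$ follow directly from rearranging $\eqref{trace condition}$, and the spectrum identification reduces to the $2\times 2$ characteristic-polynomial computation above.
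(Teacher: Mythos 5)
The paper states this theorem as a known result, attributing it to Fiedler (extending Suleimanova), and gives no proof of its own, so there is no in-paper argument to compare against. Your bordering-plus-induction proof is correct and self-contained, and it is essentially the classical construction. The quantitative claims all check out: from $\sum_i \lambda_i \ge 0$ one gets $\lambda_1 \ge \sum_{i\ge 2}\lvert\lambda_i\rvert$, hence $\lambda_1^\ast = \lambda_1 + \lambda_n \ge 0$, $\lambda_1^\ast \ge \lvert\lambda_i\rvert$ for $2\le i\le n-1$, and the reduced list has the same trace; the restriction of your $A$ to the span of $(u^T,0)^T$ and the last standard basis vector is the symmetric $2\times 2$ matrix with diagonal entries $\lambda_1^\ast,0$ and off-diagonal $\rho$, which has trace $\lambda_1+\lambda_n$ and determinant $\lambda_1\lambda_n$, hence eigenvalues $\lambda_1$ and $\lambda_n$; and its Perron eigenvector has both coordinates positive, so $A$ inherits a strictly positive Perron eigenvector, closing the induction. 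You have also correctly isolated the one genuinely delicate point: the strengthened inductive hypothesis of a strictly positive Perron eigenvector is what makes the bordering vector $\rho u$ have no zero entries, and block-diagonal realisations show this is not automatic. A small remark: your secondary worry that zero entries in $u$ would ``break the Perron--Frobenius argument identifying $\lambda_1$ as the spectral radius'' is superfluous, since you never need Perron--Frobenius for that identification --- you read it off directly from the ordering together with $\lambda_1 \ge -\lambda_n$.
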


By~\cite{RefWorks:39} and \cite{RefWorks:59} we have the following theorem:
\begin{theorem} \label{th:SNIEP_n=4}
Let $ \sigma = \left( \lambda_1, \lambda_2, \lambda_3, \lambda_4 \right) $ be a list of real numbers. Necessary and sufficient conditions for $ \sigma $ to be the spectrum of a $ 4 \times 4 $ symmetric nonnegative matrix are \eqref{PF condition} and \eqref{trace condition}.
\end{theorem}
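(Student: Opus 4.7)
The plan is to prove necessity and sufficiency separately. Necessity is immediate from material already stated: \eqref{PF condition} is the Perron--Frobenius theorem for nonnegative matrices, and \eqref{trace condition} follows because $\operatorname{tr}(A)=\sum_{\lambda\in\sigma}\lambda$ equals the sum of the nonnegative diagonal entries of $A$.

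For sufficiency I would order the eigenvalues so that $\lambda_1\geq\lambda_2\geq\lambda_3\geq\lambda_4$; then \eqref{PF condition} is equivalent to $\lambda_1\geq 0$ and $\lambda_1+\lambda_4\geq 0$, and I would split into cases. If $\lambda_2\leq 0$, Theorem~\ref{th:Suleimanova} applies directly. If $\lambda_2>0$ and $\lambda_2+\lambda_3\geq 0$, then the pairs $(\lambda_1,\lambda_4)$ and $(\lambda_2,\lambda_3)$ both have nonnegative sum with first entry dominating in absolute value, so the block-diagonal matrix $A=B_1\oplus B_2$, with each $2\times 2$ block
\[
B(\mu,\nu)=\tfrac{1}{2}\begin{pmatrix}\mu+\nu & \mu-\nu\\ \mu-\nu & \mu+\nu\end{pmatrix}
\]
realizing one of these pairs, is symmetric nonnegative with the prescribed spectrum.

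The main obstacle is the remaining case $\lambda_2>0$ and $\lambda_2+\lambda_3<0$: here no $2\times 2$ block pairing yields a symmetric nonnegative realization, since at least one block would be forced to have negative trace. For this case I would use the $\mathbb Z/2\times\mathbb Z/2$-invariant ansatz
\[
A=\begin{pmatrix}d_0 & a & b & c\\ a & d_0 & c & b\\ b & c & d_0 & a\\ c & b & a & d_0\end{pmatrix},
\]
whose four eigenvalues are $d_0\pm a\pm b\pm c$ (with an even number of minus signs), read off from the characters of $\mathbb Z/2\times\mathbb Z/2$; the Perron eigenvalue, with eigenvector $(1,1,1,1)^T$, is $d_0+a+b+c$. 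Matching the remaining three expressions to $\lambda_2,\lambda_3,\lambda_4$ and inverting the (linear) system gives
\[
d_0=\tfrac{1}{4}\textstyle\sum_i\lambda_i,\qquad a=\tfrac{1}{4}\bigl((\lambda_1+\lambda_4)-(\lambda_2+\lambda_3)\bigr),
\]
together with $b=\tfrac{1}{4}((\lambda_1-\lambda_2)+(\lambda_3-\lambda_4))$ and $c=\tfrac{1}{4}((\lambda_1-\lambda_3)+(\lambda_2-\lambda_4))$. The entries $b$ and $c$ are visibly nonnegative from the ordering, $d_0\geq 0$ from \eqref{trace condition}, and the crux is that $a\geq 0$ precisely because in this final case $\lambda_2+\lambda_3<0\leq\lambda_1+\lambda_4$. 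Thus the very hypothesis that breaks all block-diagonal constructions is exactly what makes the invariant ansatz produce a valid symmetric nonnegative realization, completing the proof.
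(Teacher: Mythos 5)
Your proof is correct, and it is genuinely different from what the paper does: the paper does not prove Theorem~\ref{th:SNIEP_n=4} at all but simply cites Loewy--London and Fiedler and records the conclusion. You instead give a short, self-contained construction, and it checks out in detail.

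The two block cases are straightforward (Theorem~\ref{th:Suleimanova} when $\lambda_2 \leq 0$, and the direct sum of two $2\times2$ blocks when $\lambda_2 + \lambda_3 \geq 0$). The key contribution is the final case: the $\mathbb{Z}/2\times\mathbb{Z}/2$-circulant ansatz, whose eigenvectors are the four characters $(1,1,1,1)^T$, $(1,1,-1,-1)^T$, $(1,-1,1,-1)^T$, $(1,-1,-1,1)^T$ with eigenvalues $d_0 \pm a \pm b \pm c$ (even number of signs). With the labelling $\lambda_1 = d_0+a+b+c$, $\lambda_2 = d_0-a-b+c$, $\lambda_3 = d_0-a+b-c$, $\lambda_4 = d_0+a-b-c$ the inversion gives exactly your formulas for $d_0,a,b,c$, and nonnegativity of $b,c$ comes from the ordering, of $d_0$ from~\eqref{trace condition}, and of $a$ from $\lambda_1+\lambda_4 \geq 0 > \lambda_2 + \lambda_3$ together with~\eqref{PF condition}. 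In fact the ansatz handles every spectrum with $\lambda_1+\lambda_4 \geq \lambda_2+\lambda_3$, and the block-diagonal construction handles $\lambda_2+\lambda_3 \geq 0$; since $(\lambda_1+\lambda_4)+(\lambda_2+\lambda_3) = e_1 \geq 0$ rules out both failing simultaneously, the Suleimanova/Fiedler case is actually redundant. What your approach buys over the paper's bare citation is a transparent, explicit realization; what it loses is none, since it uses nothing beyond the facts already available in Section~\ref{sec:preliminaries}.
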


The following necessary condition due to McDonald and Neumann \cite{RefWorks:45} was extended by Loewy and McDonald \cite{RefWorks:36}:
\begin{lemma} \label{lem:MN condition}
Let $ A $ be a $ 5 \times 5 $ nonnegative symmetric matrix with a spectrum $ \sigma = \left( \lambda_1, \lambda_2, \dots, \lambda_5 \right) $ such that $ \lambda_1 \geq \lambda_2 \geq \lambda_3 \geq \lambda_4 \geq \lambda_5 $. Then
\begin{equation}
\lambda_1 + \lambda_3 + \lambda_4 \geq 0. \label{MN condition}
\end{equation}
\end{lemma}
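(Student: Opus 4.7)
The plan is a case analysis on the signs of $\lambda_2$ and $\lambda_3$, dispatching the easy cases using \eqref{PF condition} and \eqref{trace condition} directly, and then attacking the remaining case using further structural constraints on $A$.

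First, if $\lambda_3 \geq 0$, then $\lambda_1 + \lambda_3 + \lambda_4 \geq \lambda_1 + \lambda_4 \geq 0$, where the last inequality follows from \eqref{PF condition} since $\lambda_1 \geq \lvert \lambda_4 \rvert$. Second, if $\lambda_2 \leq 0$, then $\lambda_2, \lambda_3, \lambda_4, \lambda_5$ are all nonpositive, and \eqref{trace condition} rearranges as $\lambda_1 \geq -\lambda_2 - \lambda_3 - \lambda_4 - \lambda_5$, whence $\lambda_1 + \lambda_3 + \lambda_4 \geq -\lambda_2 - \lambda_5 \geq 0$.

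The remaining case is $\lambda_2 > 0 > \lambda_3$. Here \eqref{PF condition} and \eqref{trace condition} alone cannot suffice, since these are precisely the conditions characterizing the spectra of $4 \times 4$ symmetric nonnegative matrices by Theorem~\ref{th:SNIEP_n=4}, and one can easily exhibit $4$-tuples satisfying them for which the analogous sum is negative. The desired inequality must therefore reflect a genuine $n = 5$ phenomenon, and I plan to exploit two pieces of additional structure: (i) every $4 \times 4$ principal submatrix $B$ of $A$ is itself symmetric nonnegative, so by Theorem~\ref{th:SNIEP_n=4} its spectrum $\mu_1 \geq \mu_2 \geq \mu_3 \geq \mu_4$ satisfies $\mu_1 \geq 0$ and $\sum_i \mu_i \geq 0$; (ii) Cauchy interlacing pins $\mu_i \in [\lambda_{i+1}, \lambda_i]$. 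I would play off the five possible choices of deleted row/column against the Schur--Horn majorization constraints that force the diagonal entries of $A$ to be majorized by $(\lambda_1, \ldots, \lambda_5)$, seeking to squeeze out the desired bound.

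The main obstacle is that the useful interlacing inequalities $\mu_2 \geq \lambda_3$ and $\mu_3 \geq \lambda_4$ point in the opposite direction from what is needed, and therefore do not combine with $\sum_i \mu_i \geq 0$ in a purely linear fashion. I expect the proof to require a quadratic ingredient, such as comparing $\operatorname{tr}(A^2) = \sum_i \lambda_i^2$ with $\operatorname{tr}(B^2)$ (whose difference equals the squared Frobenius norm of the deleted row and is controlled by the nonnegativity of the entries of $A$), or equivalently using the nonnegativity of the diagonal entries of $A^2$ expressed as weighted sums of $\lambda_i^2$ via the eigenvector matrix. The technical heart of the argument should then rule out the configuration $\lambda_1 + \lambda_3 + \lambda_4 < 0$ in this final case by deriving a contradiction from these combined linear and quadratic constraints.
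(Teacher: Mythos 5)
The paper does not prove this lemma: it imports it as a cited result due to McDonald and Neumann~\cite{RefWorks:45}, later extended by Loewy and McDonald~\cite{RefWorks:36}, so there is no internal proof to compare against. Your first two cases are correct and elementary: when $\lambda_3 \geq 0$ the bound follows from the Perron--Frobenius inequality $\lambda_1 + \lambda_4 \geq 0$, and when $\lambda_2 \leq 0$ the trace condition \eqref{trace condition} rearranges to give $\lambda_1 + \lambda_3 + \lambda_4 \geq -\lambda_2 - \lambda_5 \geq 0$. You also correctly observe that \eqref{PF condition} and \eqref{trace condition} alone cannot handle the remaining case $\lambda_2 > 0 > \lambda_3$.

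But that remaining case is the entire content of the lemma, and for it you supply only a plan, not an argument. Phrases like ``I would play off the five possible choices \dots'' and ``I expect the proof to require a quadratic ingredient \dots'' are research heuristics, not proof steps. Moreover the tools you name do not obviously close the gap: the trace condition for a $4 \times 4$ principal submatrix of $A$ reduces to nonnegativity of the deleted diagonal entry and hence carries no new information beyond what is already in \eqref{trace condition}, and you yourself observe that the interlacing inequalities $\mu_2 \geq \lambda_3$, $\mu_3 \geq \lambda_4$ push the wrong way. The McDonald--Neumann inequality is a genuinely delicate structural result about $5 \times 5$ symmetric nonnegative matrices; it is not a near-consequence of interlacing plus Schur--Horn, and the missing case is a real gap rather than a routine detail. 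The honest treatment, matching the paper, is to cite the result rather than claim a proof.
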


\begin{lemma} \label{lem:SNIEP_n=5_MN}
Let a list of real numbers $ \sigma = \left( \lambda_1, \lambda_2, \dots, \lambda_5 \right) $ satisfy $ \lambda_1 \geq \lambda_2 > 0 \geq \lambda_3 \geq \lambda_4 \geq \lambda_5 $. Necessary and sufficient conditions for $ \sigma $ to be the spectrum of a $ 5 \times 5 $ symmetric nonnegative matrix are \eqref{PF condition}, \eqref{trace condition} and \eqref{MN condition}.
\end{lemma}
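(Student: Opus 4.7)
The plan is to handle necessity and sufficiency separately. Necessity of \eqref{PF condition} and \eqref{trace condition} is immediate from the Perron-Frobenius theorem and the nonnegativity of the diagonal entries, and necessity of \eqref{MN condition} is exactly Lemma~\ref{lem:MN condition}. For sufficiency, I would assemble a realizing $5 \times 5$ matrix from a $3 \times 3$ block carrying the pair $(\lambda_3, \lambda_4)$ and a $2 \times 2$ block carrying $\lambda_5$, glued together by Fiedler's symmetric block construction.

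Concretely, I would search for real numbers $\alpha, \beta$ satisfying
\begin{equation*}
\alpha \geq -\lambda_3 - \lambda_4, \qquad \beta \geq -\lambda_5, \qquad \alpha + \beta = \lambda_1 + \lambda_2, \qquad \alpha \beta \geq \lambda_1 \lambda_2.
\end{equation*}
The first two ensure, via Theorem~\ref{th:Suleimanova}, the existence of a $3 \times 3$ symmetric nonnegative matrix $A$ with spectrum $(\alpha, \lambda_3, \lambda_4)$ and a $2 \times 2$ symmetric nonnegative matrix $B$ with spectrum $(\beta, \lambda_5)$, each with a nonnegative unit Perron eigenvector $u$ and $v$ respectively. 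Setting $t = \sqrt{\alpha \beta - \lambda_1 \lambda_2} \geq 0$, the matrix
\begin{equation*}
C = \begin{pmatrix} A & t \, u v^T \\ t \, v u^T & B \end{pmatrix}
\end{equation*}
is symmetric nonnegative, and a standard decomposition using the orthogonal complements of $u$ and $v$ inside the $A$- and $B$-blocks shows that its spectrum is $\{\lambda_3, \lambda_4, \lambda_5, \gamma_1, \gamma_2\}$, where $\gamma_1, \gamma_2$ are the eigenvalues of $\bigl(\begin{smallmatrix} \alpha & t \\ t & \beta \end{smallmatrix}\bigr)$. The relations $\gamma_1 + \gamma_2 = \alpha + \beta = \lambda_1 + \lambda_2$ and $\gamma_1 \gamma_2 = \alpha \beta - t^2 = \lambda_1 \lambda_2$ then force $\{\gamma_1, \gamma_2\} = \{\lambda_1, \lambda_2\}$, completing the realization.

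The main obstacle is showing that such $\alpha, \beta$ exist. Writing $S = \lambda_1 + \lambda_2$, the polynomial $\alpha (S - \alpha) - \lambda_1 \lambda_2$ is a downward parabola in $\alpha$ with roots $\lambda_1$ and $\lambda_2$, so $\alpha \beta \geq \lambda_1 \lambda_2$ holds precisely when $\alpha \in [\lambda_2, \lambda_1]$. The search therefore reduces to showing that the interval
\begin{equation*}
\bigl[\max(-\lambda_3 - \lambda_4,\, \lambda_2),\; \min(\lambda_1 + \lambda_2 + \lambda_5,\, \lambda_1)\bigr]
\end{equation*}
is nonempty, which amounts to four endpoint inequalities, each matched to one hypothesis: $-\lambda_3 - \lambda_4 \leq \lambda_1$ is \eqref{MN condition}; $\lambda_2 \leq \lambda_1 + \lambda_2 + \lambda_5$ rearranges to $\lambda_1 + \lambda_5 \geq 0$, which is \eqref{PF condition}; $-\lambda_3 - \lambda_4 \leq \lambda_1 + \lambda_2 + \lambda_5$ is \eqref{trace condition}; and $\lambda_2 \leq \lambda_1$ is part of the hypothesis. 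Any $\alpha$ in this interval, together with $\beta = S - \alpha$, closes the argument.
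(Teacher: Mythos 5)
Your argument is correct, and it is more self-contained than the paper's. In the paper, the necessity direction is treated exactly as you do (Perron--Frobenius for \eqref{PF condition}, nonnegative trace for \eqref{trace condition}, and Lemma~\ref{lem:MN condition} for \eqref{MN condition}), but for sufficiency the paper simply cites an external reference for the realizing matrices, without reproducing any construction. You instead build the realization directly: a $3\times 3$ Suleimanova block for $(\alpha,\lambda_3,\lambda_4)$, a $2\times 2$ Suleimanova block for $(\beta,\lambda_5)$, and Fiedler's symmetric gluing with coupling $t=\sqrt{\alpha\beta-\lambda_1\lambda_2}$ to turn the two Perron roots $\alpha,\beta$ into $\lambda_1,\lambda_2$. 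The feasibility analysis is exactly right: $\alpha\beta\geq\lambda_1\lambda_2$ with $\alpha+\beta=\lambda_1+\lambda_2$ pins $\alpha$ to $[\lambda_2,\lambda_1]$, the two Suleimanova constraints pin it to $[-\lambda_3-\lambda_4,\,\lambda_1+\lambda_2+\lambda_5]$, and the four resulting endpoint comparisons are, one each, \eqref{MN condition}, \eqref{PF condition}, \eqref{trace condition}, and the ordering hypothesis $\lambda_2\leq\lambda_1$. Note also that the implicit requirements $\alpha\geq 0$ and $\alpha\geq-\lambda_4$ are automatic from $\alpha\geq-\lambda_3-\lambda_4$ and $\lambda_3\leq 0$, so both blocks really do fall under Theorem~\ref{th:Suleimanova}. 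The upshot is that your proof replaces an opaque citation with a transparent two-block Fiedler construction that makes visible how each of the three necessary conditions is used exactly once.
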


\begin{proof}
Let $ A $ be a $ 5 \times 5 $ symmetric nonnegative matrix with a spectrum $ \sigma = \left( \lambda_1, \lambda_2, \dots, \lambda_5 \right) $ such that $ \lambda_1 \geq \lambda_2 \geq \lambda_3 \geq \lambda_4 \geq \lambda_5 $. Therefore, \eqref{PF condition} and \eqref{trace condition} are met. By Lemma~\ref{lem:MN condition} condition \eqref{MN condition} is met as well.

Let a list of real numbers $ \sigma = \left( \lambda_1, \lambda_2, \dots, \lambda_5 \right) $ satisfy $ \lambda_1 \geq \lambda_2 > 0 \geq \lambda_3 \geq \lambda_4 \geq \lambda_5 $, \eqref{PF condition}, \eqref{trace condition} and \eqref{MN condition}. Realizing $ 5 \times 5 $ symmetric nonnegative matrices for this case are given in~\cite{RefWorks:16}.
\end{proof}

We denote the spectrum of a $ 5 \times 5 $ nonnegative real symmetric matrix by the list of real numbers $ \sigma = \left( \lambda_1, \lambda_2, \dots, \lambda_5 \right) $. Unless otherwise stated we assume that
\begin{equation}
\lambda_1 \geq \lambda_2 \geq \dots \geq \lambda_5 \geq -\lambda_1. \label{monotonicity condition}
\end{equation}
Let $ \alpha \in \mathbb{R} $. We define $ \alpha \cdot \sigma = \left( \alpha \lambda_1, \alpha \lambda_2, \dots, \alpha \lambda_5 \right) $.

By~\cite{RefWorks:36} we know that the only unknown realizable region of SNIEP for $ n = 5 $ is when $ \lambda_{3} > \sum_{i=1}^{5} \lambda_i $.

\section{Matrix Patterns} \label{sec:patterns}

In this section we define several matrix patterns and calculate their spectra. Let $ \sigma = \left( \lambda_1, \lambda_2, \dots, \lambda_5 \right) $ be a list of complex numbers and let $ g \in \mathbb{C} $. Define

\[
A \left( \sigma \right) = \frac{1}{2 u}
\left( \begin {array}{ccccc} 2 u e_{{1}}&0&\sqrt {2 u^3}&0&\sqrt {2 u^3}\\\noalign{\medskip}0&0&0&2 w&2 \sqrt {v}
\\\noalign{\medskip}\sqrt {2 u^3}&0&0&2 \sqrt {v}&2 r
\\\noalign{\medskip}0&2 w&2 \sqrt {v}&0&0\\\noalign{\medskip}\sqrt {2 u^3}&2 \sqrt {v}&2 r&0&0\end {array} \right)
,\]

\[
B \left( \sigma, g \right) =
 \left( \begin {array}{ccccc} g&\sqrt {l}&\sqrt {m}&0&0
\\\noalign{\medskip}\sqrt {l}&0&0&0&k\\\noalign{\medskip}\sqrt {m}&0&e
_{{1}}-2 g&\sqrt {m}&0\\\noalign{\medskip}0&0&\sqrt {m}&g&\sqrt {l}
\\\noalign{\medskip}0&k&0&\sqrt {l}&0\end {array} \right)
,\]

where
\begin{alignat*}{2}
e_n &= e_n( \sigma ) &&= \sum_{1 \leq j_1 < j_2 < \dots < j_n \leq 5}{\,\prod_{i=1}^{n}{\lambda_{j_i}}} \quad (n = 1, 2 \dots, 5), \\
u &= u( \sigma ) &&= -e_2( \sigma ) -{\lambda_{{2}}}^{2}-{\lambda_{{5}}}^{2}, \\
v &= v( \sigma ) &&= - \left( \lambda_{{3}} + \lambda_{{5}} \right)  \left( \lambda_{{4}} + \lambda_{{5}} \right)  \left( \lambda_{{2}} + \lambda_{{4}} \right) \left( \lambda_{{2}} + \lambda_{{3}} \right)  \left( \lambda_{{1}} + \lambda_{{2}} \right)  \left( \lambda_{{1}} + \lambda_{{5}} \right), \\
w &= w( \sigma ) &&= \lambda_{{2}} \lambda_{{5}} e_1( \sigma ) -\lambda_{{1}}\lambda_{{3}}\lambda_{{4}}, \\
r &= r( \sigma ) &&= e_3( \sigma ) + e_1( \sigma ) \left( {\lambda_{{2}}}^{2} + {\lambda_{{5}}}^{2} \right), \\
k &= k( \sigma, g ) &&= g - \lambda_{{3}} - \lambda_{{5}}, \\
l &= l( \sigma, g ) &&= \left( g - \lambda_{{3}} \right)  \left( \lambda_{{5}} - g \right), \\
m &= m( \sigma, g ) &&= - {g}^{2} + e_{{1}}( \sigma ) g - \frac{1}{2} \left( e_{{2}}( \sigma ) + {\lambda_{{3}}}^{2} + {\lambda_{{5}}}^{2} \right).
\end{alignat*}

In order for $ A( \sigma ) $ to be well-defined we assume that $ u \neq 0 $ and that if $ z = r \exp( \imath \varphi ) \in \mathbb{C} $, where $ r $ is a nonnegative real number and $ -\pi < \varphi \leq \pi $, then $ \sqrt{z} = \sqrt{r} \exp( \imath \frac{\varphi}{2} ) $. 

\begin{lemma} \label{lem:spectrum_pattern1}
Assume $ \sigma = \left( \lambda_1, \lambda_2, \dots, \lambda_5 \right) $ is a list of complex numbers and $ u( \sigma ) \neq 0 $. Then the spectrum of $ A( \sigma ) $ is $ \sigma $.
\end{lemma}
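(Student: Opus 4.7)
The plan is to prove the polynomial identity $\det(xI - A(\sigma)) = \prod_{i=1}^{5}(x - \lambda_i)$ by first exploiting an obvious symmetry of $A(\sigma)$ to split the characteristic polynomial as the product of a cubic and a quadratic, and then matching each factor to the appropriate sub-product on the right.

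Let $Q$ be the permutation matrix swapping coordinates $2 \leftrightarrow 4$ and $3 \leftrightarrow 5$. A quick inspection of the rows of $A(\sigma)$ shows $Q A(\sigma) Q^{-1} = A(\sigma)$, so $A(\sigma)$ decomposes as $B_+ \oplus B_-$ in the orthonormal basis $\{\, e_1,\; (e_2+e_4)/\sqrt{2},\; (e_3+e_5)/\sqrt{2},\; (e_2-e_4)/\sqrt{2},\; (e_3-e_5)/\sqrt{2} \,\}$. A direct computation yields
\[
B_+ = \begin{pmatrix} e_1 & 0 & \sqrt{u} \\ 0 & w/u & \sqrt{v}/u \\ \sqrt{u} & \sqrt{v}/u & r/u \end{pmatrix},\qquad B_- = -\frac{1}{u}\begin{pmatrix} w & \sqrt{v} \\ \sqrt{v} & r \end{pmatrix}.
\]
Although $\sqrt{u}$ and $\sqrt{v}$ appear as entries, they only enter the expansions of $\det(xI - B_\pm)$ in squared pairs along closed cycles, so both characteristic polynomials are genuine polynomials in $e_1, u, v, w, r$.

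The natural split is that $B_-$ should contribute the eigenvalues $\lambda_2, \lambda_5$ and $B_+$ the eigenvalues $\lambda_1, \lambda_3, \lambda_4$. Matching $\operatorname{tr} B_-$ and $\det B_-$ to $\lambda_2 + \lambda_5$ and $\lambda_2 \lambda_5$ reduces the lemma to the two polynomial identities
\[
w + r = -u(\lambda_2 + \lambda_5),\qquad wr - v = u^2 \lambda_2 \lambda_5.
\]
Granted these, the $B_+$ block falls into place automatically: $\operatorname{tr} B_+ = e_1 + (w+r)/u = e_1 - (\lambda_2 + \lambda_5) = \lambda_1 + \lambda_3 + \lambda_4$, and expanding along the first row gives $\det B_+ = e_1(wr - v)/u^2 - w = e_1 \lambda_2 \lambda_5 - (\lambda_2 \lambda_5 e_1 - \lambda_1 \lambda_3 \lambda_4) = \lambda_1 \lambda_3 \lambda_4$. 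The remaining coefficient, the sum of principal $2 \times 2$ minors of $B_+$, reduces to $\lambda_1 \lambda_3 + \lambda_1 \lambda_4 + \lambda_3 \lambda_4$ in the same spirit, using $e_2(\sigma) = e_2(\lambda_1, \lambda_3, \lambda_4) + (\lambda_2 + \lambda_5)(\lambda_1 + \lambda_3 + \lambda_4) + \lambda_2 \lambda_5$.

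The main obstacle is therefore the verification of the two base identities above. The first is a degree-three identity in $\lambda_1,\dots,\lambda_5$ that collapses in a few lines of expansion. The second is of degree six and substantially more tedious; my plan is to exploit the already-factored form of $v$ and verify it by a careful hand expansion backed up by a symbolic-algebra check. Once both identities are in place, the characteristic polynomials of $B_-$ and $B_+$ coincide with $(x-\lambda_2)(x-\lambda_5)$ and $(x-\lambda_1)(x-\lambda_3)(x-\lambda_4)$, and the lemma follows.
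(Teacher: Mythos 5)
Your proposal is correct, and it takes a genuinely different and cleaner route than the paper's. The paper computes the full characteristic polynomial of the $5\times5$ matrix $A(\sigma)$ and then verifies, coefficient by coefficient, that $-u^2 q_3 = -u^2 e_2$, $u^2 q_2 = -u^2 e_3$, $u^4 q_1 = u^4 e_4$, and $-u^4 q_0 = u^4 e_5$; the last of these is a degree-$13$ polynomial identity in $\lambda_1,\dots,\lambda_5$, so the verification is in practice a computer-algebra exercise. You instead notice that $A(\sigma)$ commutes with the permutation $(2\,4)(3\,5)$ and block-diagonalize into a $3\times3$ block $B_+$ (on the symmetric subspace) and a $2\times2$ block $B_-$ (on the antisymmetric subspace); I confirmed your formulas for $B_\pm$ in the orthonormal basis you chose. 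The whole lemma then reduces to the two identities
\begin{equation*}
w + r = -u(\lambda_2+\lambda_5), \qquad wr - v = u^2\lambda_2\lambda_5,
\end{equation*}
of degrees $3$ and $6$ (both of which I spot-checked numerically), and your bookkeeping showing that these force $\operatorname{tr} B_- = \lambda_2+\lambda_5$, $\det B_- = \lambda_2\lambda_5$, $\operatorname{tr} B_+ = \lambda_1+\lambda_3+\lambda_4$, $\det B_+ = \lambda_1\lambda_3\lambda_4$, and $e_2(B_+) = \lambda_1\lambda_3+\lambda_1\lambda_4+\lambda_3\lambda_4$ is correct. So both proofs bottom out in a polynomial identity, but yours replaces one degree-$13$ check with one degree-$6$ check (the degree-$3$ one really is a few lines), and more importantly it explains \emph{why} the pattern works: the symmetry is what makes $\lambda_2,\lambda_5$ separate off into a $2\times2$ block, which is exactly the role those two eigenvalues play in the definitions of $u$, $w$, $r$. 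One small remark: you never explicitly invoke the hypothesis $u(\sigma)\neq 0$, but it is of course needed for $A(\sigma)$ and your blocks $B_\pm$ to be defined at all, and the two identities you isolate hold identically as polynomials, so nothing is lost. The one thing your writeup leaves as a plan rather than a finished step is the expansion of $wr - v = u^2\lambda_2\lambda_5$; since the paper's own proof is equally reliant on an unstated symbolic computation at this point, that is not a gap relative to the paper, but you should carry it out before considering the argument complete.
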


\begin{proof}
The characteristic polynomial of $ A( \sigma ) $ is \[ P_A( z ) = z^5 + q_4 z^4 + q_3 z^3 + q_2 z^2 + q_1 z + q_0 ,\] where
\begin{align*}
q_4 &= -e_1, \\
q_3 &= -{\frac {{u}^{3}+2 v + {r}^{2} + {w}^{2}}{{u}^{2}}}, \\
q_2 &= {\frac {-r{u}^{2}+2 e_{{1}}v+e_{{1}}{r}^{2}+e_{{1}}{w}^{2}}{{u}^{2}}}, \\
q_1 &= {\frac {-2 vwr + {w}^{2}{u}^{3} + {u}^{3}v + {v}^{2} + {r}^{2}{w}^{2}}{{u}^{4}}}, \\
q_0 &= -{\frac {-2 e_{{1}}vwr + {u}^{2}vw-{u}^{2}{w}^{2}r+e_{{1}}{r}^{2}{w}^{2}+e_{{1}}{v}^{2}}{{u}^{4}}}.
\end{align*}

The following expressions are polynomials in $ \lambda_1, \dots, \lambda_5 $. Therefore, it is straightforward to check that
\begin{align*}
-u^2 q_3 &= {u}^{3}+2 v + {r}^{2} + {w}^{2} = -u^2 e_2, \\
u^2 q_2 &= -r{u}^{2}+2 e_{{1}}v+e_{{1}}{r}^{2}+e_{{1}}{w}^{2} = - u^2 e_3, \\
u^4 q_1 &= -2 vwr + {w}^{2}{u}^{3} + {u}^{3}v + {v}^{2} + {r}^{2}{w}^{2} = u^4 e_4, \\
-u^4 q_0 &= -2 e_{{1}}vwr + {u}^{2}vw - {u}^{2}{w}^{2}r + e_{{1}}{r}^{2}{w}^{2} + e_{{1}}{v}^{2} = u^4 e_5.
\end{align*}

As $ u \neq 0 $ we get that
\begin{align*}
P_A( z ) &= z^5 - e_1 z^4 + e_2 z^3 - e_3 z^2 + e_4 z - e_5 \\
 &= ( z - \lambda_1 ) ( z - \lambda_2 ) ( z - \lambda_3 ) ( z - \lambda_4 ) ( z - \lambda_5 )
\end{align*}
and so the spectrum of $ A( \sigma ) $ is $ \sigma $.
\end{proof}

Let
\[ Q_\sigma( z ) = 2 {z}^{3} - 2 \left( \lambda_{{3}} + \lambda_{{5}} \right) {z}^{2} - \left( e_{{2}} + \left( \lambda_{{3}} - \lambda_{{5}} \right) ^{2} \right) z + e_{{3}}  + e_{{1}} \left( {\lambda_{{3}}}^{2} + {\lambda_{{5}}}^{2} \right) .
\]

\begin{lemma} \label{lem:spectrum_pattern2}
Assume $ \sigma = \left( \lambda_1, \lambda_2, \dots, \lambda_5 \right) $ is a list of complex numbers and let $ g $ be a root of the polynomial $ Q_\sigma( z ) $. Then the spectrum of $ B( \sigma, g ) $ is $ \sigma $.
\end{lemma}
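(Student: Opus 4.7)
The plan is to exploit the involutive permutation symmetry of $B(\sigma, g)$ in order to split its characteristic polynomial into lower-degree factors. Let $P$ be the permutation matrix that swaps coordinates $1 \leftrightarrow 4$ and $2 \leftrightarrow 5$; a direct inspection of the entries shows $P B(\sigma,g) P = B(\sigma, g)$. Since $P$ is orthogonal with a $+1$-eigenspace of dimension $3$ and a $-1$-eigenspace of dimension $2$, the matrix $B(\sigma, g)$ is orthogonally similar to a block-diagonal matrix with a $3 \times 3$ block $B_+$ and a $2 \times 2$ block $B_-$. Writing $\delta_1, \dots, \delta_5$ for the standard basis of $\mathbb{R}^5$ and working in the orthonormal bases $\{(\delta_1+\delta_4)/\sqrt{2},\ (\delta_2+\delta_5)/\sqrt{2},\ \delta_3\}$ and $\{(\delta_1-\delta_4)/\sqrt{2},\ (\delta_2-\delta_5)/\sqrt{2}\}$, a short computation gives
\[
B_+ = \begin{pmatrix} g & \sqrt{l} & \sqrt{2m} \\ \sqrt{l} & k & 0 \\ \sqrt{2m} & 0 & e_1 - 2g \end{pmatrix},
\qquad B_- = \begin{pmatrix} g & \sqrt{l} \\ \sqrt{l} & -k \end{pmatrix},
\]
so the spectrum of $B(\sigma, g)$ is the multiset union of the spectra of $B_+$ and $B_-$.

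The $2 \times 2$ block $B_-$ is handled immediately: substituting the definitions of $k$ and $l$, one finds $\operatorname{tr}(B_-) = g - k = \lambda_3 + \lambda_5$ and $\det(B_-) = -gk - l = \lambda_3 \lambda_5$, so the characteristic polynomial of $B_-$ is $(z - \lambda_3)(z - \lambda_5)$; note that the hypothesis $Q_\sigma(g)=0$ is not needed here. For the $3 \times 3$ block, expanding $\det(zI - B_+)$ along its third row (which has only two nonzero entries) yields
\[
(z-g)(z-k)(z-e_1+2g) - l(z-e_1+2g) - 2m(z-k) \;=\; z^3 - s_1 z^2 + s_2 z - s_3.
\]
The identity $s_1 = g + k + (e_1 - 2g) = e_1 - \lambda_3 - \lambda_5 = \lambda_1 + \lambda_2 + \lambda_4$ is immediate, and substituting the definitions of $k, l, m$ reduces $s_2 = \lambda_1 \lambda_2 + \lambda_1 \lambda_4 + \lambda_2 \lambda_4$ to a symmetric-function identity that holds for every value of $g$.

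The crux of the proof is the cubic identity $s_3 - \lambda_1 \lambda_2 \lambda_4 = -Q_\sigma(g)$ in $\mathbb{C}[\lambda_1,\ldots,\lambda_5][g]$, where $\lambda_1 \lambda_2 \lambda_4$ is first rewritten in terms of $e_1, e_2, e_3, \lambda_3, \lambda_5$ via the factorization $(z-\lambda_1)(z-\lambda_2)(z-\lambda_4)(z-\lambda_3)(z-\lambda_5) = \prod_i (z-\lambda_i)$. Both sides are cubics in $g$ with matching leading term $-2g^3$, so only finitely many lower coefficients have to be checked. Once this identity is established, the hypothesis $Q_\sigma(g) = 0$ forces $s_3 = \lambda_1 \lambda_2 \lambda_4$, and the characteristic polynomial of $B_+$ becomes $(z-\lambda_1)(z-\lambda_2)(z-\lambda_4)$. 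Combined with the factorization of the characteristic polynomial of $B_-$, the characteristic polynomial of $B(\sigma, g)$ factors as $\prod_{i=1}^{5}(z - \lambda_i)$, showing that the spectrum is $\sigma$. The main obstacle is the polynomial identity for $s_3$, which — as with the identities in Lemma~\ref{lem:spectrum_pattern1} — is a straightforward but tedious symbolic verification.
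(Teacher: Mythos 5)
Your proof is correct, and it takes a genuinely different route from the paper. The paper computes the full $5\times5$ characteristic polynomial $P_B(z) = z^5 + q_4 z^4 + \dots + q_0$ and verifies five coefficient identities head-on, observing (without explanation) that $q_2 = Q_\sigma(g) - e_3$, $q_1 = -(\lambda_3+\lambda_5)Q_\sigma(g) + e_4$, and $q_0 = \lambda_3\lambda_5 Q_\sigma(g) - e_5$. You instead exploit the involution $P = (1\,4)(2\,5)$ that commutes with $B(\sigma,g)$, block-diagonalizing it into a $3\times3$ block $B_+$ and a $2\times2$ block $B_-$. This is more illuminating: it explains why $\lambda_3$ and $\lambda_5$ play a distinguished role and why the coefficients $1,\ -(\lambda_3+\lambda_5),\ \lambda_3\lambda_5$ appear as the multipliers of $Q_\sigma(g)$ in the paper's formulas — they are precisely the coefficients of $\det(zI - B_-) = (z-\lambda_3)(z-\lambda_5)$, which factors out of $P_B$ identically in $g$. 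Your computations of $\operatorname{tr}(B_-) = \lambda_3+\lambda_5$, $\det(B_-) = \lambda_3\lambda_5$, $s_1 = \lambda_1+\lambda_2+\lambda_4$, and $s_2 = \lambda_1\lambda_2+\lambda_1\lambda_4+\lambda_2\lambda_4$ (all identities in $g$) check out, and the remaining identity $\det(B_+) = \lambda_1\lambda_2\lambda_4 - Q_\sigma(g)$ is equivalent, by the factorization, to the paper's $q_2 = Q_\sigma(g) - e_3$. The net effect is that the only nontrivial polynomial identity left to verify is a single cubic in $g$ instead of three of them hidden inside a quintic; the price is the preliminary change of basis, which is cheap. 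Both approaches end at a ``straightforward but tedious'' symbolic check, but yours reduces the size of that check and makes the structure of $B(\sigma,g)$ transparent.
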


\begin{proof}
The characteristic polynomial of $ B( \sigma, g ) $ is \[ P_B( z ) = z^5 + q_4 z^4 + q_3 z^3 + q_2 z^2 + q_1 z + q_0 ,\] where
\begin{align*}
q_4 &= -e_1, \\
q_3 &= -{k}^{2} - 2 l - 2 m + 2 ge_{{1}} - 3 {g}^{2} = e_2, \\
q_2 &= -2 gl + e_{{1}}{k}^{2} + 2 le_{{1}} + 2 mg-{g}^{2}e_{{1}} + 2 {g}^{3} = Q_\sigma( g ) - e_3, \\
q_1 &= 4 {g}^{2}l + 2 {k}^{2}m - 2 {k}^{2}ge_{{1}} + 3 {k}^{2}{g}^{2} + 2 ml - 2 gle_{{1}}+{l}^{2} \\
 &= - \left( \lambda_{{3}} + \lambda_{{5}} \right) Q_\sigma( g ) + e_4, \\
q_0 &= -2 lkm + 2 g{l}^{2} - 2 {k}^{2}mg + {k}^{2}{g}^{2}e_{{1}} - 2 {k}^{2}{g}^{
3} - {l}^{2}e_{{1}} \\
 &= \lambda_{{3}} \lambda_{{5}} Q_\sigma( g ) - e_5.
\end{align*}

As we assumed $ Q_\sigma( g ) = 0 $, we get that $ q_2 = -e_3 $, $ q_1 = e_4 $ and $ q_0 = -e_5 $. Therefore,
\begin{align*}
P_B( z ) &= z^5 - e_1 z^4 + e_2 z^3 - e_3 z^2 + e_4 z - e_5 \\
 &= ( z - \lambda_1 ) ( z - \lambda_2 ) ( z - \lambda_3 ) ( z - \lambda_4 ) ( z - \lambda_5 )
\end{align*}
and so the spectrum of $ B( \sigma, g ) $ is $ \sigma $.
\end{proof}

\section{Unknown Realizable Region} \label{sec:unknown_region}

In this section we show what part of the unknown region the new patterns realize.

\begin{lemma} \label{lem:nonnegative_pattern1}
Let $ \sigma = \left( \lambda_1, \lambda_2, \dots, \lambda_5 \right) $ be a list of real numbers. Assume \eqref{trace condition} and
\begin{align}
& \lambda_1 \geq \lambda_2 \geq \dots \geq \lambda_5 > - \lambda_1, \label{assumption 1} \\
& \lambda_{3} > e_1( \sigma ), \label{assumption 3} \\
& r( \sigma ) \geq 0. \label{assumption 4}
\end{align}
Then $ A( \sigma ) $ is nonnegative, and so $ \sigma $ is realizable by a nonnegative symmetric matrix.
\end{lemma}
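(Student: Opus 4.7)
The matrix $A(\sigma)$ is symmetric by direct inspection, and by Lemma~\ref{lem:spectrum_pattern1} its spectrum is $\sigma$ provided $u \neq 0$. Thus the lemma will follow once we verify that each entry of $A(\sigma)$ is a nonnegative real number. Scanning the pattern, this reduces to five scalar inequalities: $e_1 \geq 0$, $r \geq 0$, $u > 0$, $v \geq 0$, and $w \geq 0$, where $u > 0$ also ensures $\sqrt{2u^3}$ is real and that $1/(2u)$ preserves signs. The first two are immediate from \eqref{trace condition} and \eqref{assumption 4}; the remaining three require actual work.

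\textbf{Preliminary sign inequalities.} Before unpacking $u$, $v$, and $w$, I would derive a handful of consequences of the hypotheses. From $\lambda_3 > e_1 \geq 0$ one obtains $\lambda_1 \geq \lambda_2 \geq \lambda_3 > 0$, and \eqref{assumption 1} gives $\lambda_1 + \lambda_5 > 0$ strictly. Rewriting \eqref{assumption 3} as $\lambda_1 + \lambda_2 + \lambda_4 + \lambda_5 = e_1 - \lambda_3 < 0$, I combine with $\lambda_1 + \lambda_5 > 0$ to conclude $\lambda_2 + \lambda_4 < 0$, whence $\lambda_4 < 0$ and $\lvert \lambda_4 \rvert > \lambda_2$; combining instead with $\lambda_1 + \lambda_2 > 0$ yields $\lambda_4 + \lambda_5 < 0$, whence $\lambda_5 < 0$ and, together with \eqref{assumption 1}, $\lvert \lambda_5 \rvert < \lambda_1$. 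Finally, applying $\lambda_1, \lambda_2 \geq \lambda_3$ in $\lambda_3 > e_1$ gives $\lambda_4 + \lambda_5 < -2\lambda_3$, hence $\lambda_5 < -\lambda_3$ and $\lambda_3 + \lambda_5 < 0$.

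\textbf{The three core inequalities.} For $v$, the six linear factors in its defining product split into three positive ones ($\lambda_1+\lambda_2$, $\lambda_1+\lambda_5$, $\lambda_2+\lambda_3$) and three nonpositive ones ($\lambda_2+\lambda_4$, $\lambda_4+\lambda_5$, $\lambda_3+\lambda_5$) by the sign information just derived, so their product is nonpositive and $v \geq 0$. For $w$, I would substitute $e_1 = \lambda_3 + (\lambda_1+\lambda_2+\lambda_4+\lambda_5)$ into $w = \lambda_2 \lambda_5 e_1 - \lambda_1 \lambda_3 \lambda_4$ to obtain the identity $w = \lambda_2 \lambda_5 (\lambda_1+\lambda_2+\lambda_4+\lambda_5) + \lambda_3 \bigl( \lambda_1 \lvert \lambda_4 \rvert - \lambda_2 \lvert \lambda_5 \rvert \bigr)$; the first summand is a product of two negatives, and the chain $\lambda_1 \lvert \lambda_4 \rvert > \lambda_1 \lambda_2 > \lambda_2 \lvert \lambda_5 \rvert$ (using $\lvert \lambda_4 \rvert > \lambda_2$ and $\lambda_1 > \lvert \lambda_5 \rvert$) makes the second positive. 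For $u$, I would verify by direct expansion the identity $u = (\lambda_1+\lambda_5)(\lambda_1+\lambda_2) - \lambda_3 \lambda_4 - (\lambda_1+\lambda_2+\lambda_5) e_1$, and then use $\lambda_1 + \lambda_2 + \lambda_5 > 0$ together with $e_1 < \lambda_3$ to conclude $u > (\lambda_1+\lambda_5)(\lambda_1+\lambda_2) + \lambda_3 (\lambda_3 - e_1) > 0$.

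\textbf{Main obstacle.} The principal difficulty is not the sign analysis, which is short once the right identities are on the table, but locating those identities in the first place. Starting from the raw definitions $u = -e_2 - \lambda_2^2 - \lambda_5^2$ and $w = \lambda_2 \lambda_5 e_1 - \lambda_1 \lambda_3 \lambda_4$, positivity is far from transparent, and the trick is to trade $e_2$ and the $e_1$ factor in $w$ for combinations such as $\lambda_1 + \lambda_5$, $\lambda_1 + \lambda_2$, $\lambda_1 + \lambda_2 + \lambda_5$, and $e_1 - \lambda_3 = \lambda_1 + \lambda_2 + \lambda_4 + \lambda_5$, whose signs are pinned down directly by \eqref{assumption 1} and \eqref{assumption 3}.
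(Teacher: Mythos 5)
Your proof is correct and reduces the lemma to exactly the same five sign inequalities as the paper ($e_1 \geq 0$, $r \geq 0$, $u > 0$, $v \geq 0$, $w \geq 0$), with the analysis of $v$ being essentially identical. Where you genuinely diverge is in establishing $u > 0$ and $w \geq 0$. The paper first normalizes $\lambda_1 = 1$, reparametrizes the spectrum via $x = \lambda_2$, $y = \lambda_3$, $d = \lambda_2+\lambda_3+\lambda_4$, $t = e_1$, views $u$ and $w$ as quadratic polynomials in $y$ alone, observes they are increasing on $y > \tfrac{1}{2}(d-x)$ (hence on $y > t$), and then evaluates at the boundary $y = t$ to obtain the factored values $u(t) = -(1+x)(d-t)$ and $w(t) = -t(1+x)(d-t)$ whose signs are transparent. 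You instead exhibit global algebraic identities
\[ u = (\lambda_1+\lambda_5)(\lambda_1+\lambda_2) - \lambda_3\lambda_4 - (\lambda_1+\lambda_2+\lambda_5)\,e_1 , \qquad w = \lambda_2\lambda_5\,(e_1-\lambda_3) + \lambda_3(\lambda_2\lambda_5 - \lambda_1\lambda_4), \]
and bound each piece directly. Both arguments are sound; yours avoids normalization and the calculus-flavored monotonicity step, making it more self-contained and shorter, at the cost of needing to discover and verify two nontrivial polynomial identities by expansion (which you acknowledge as the real obstacle). The paper's route keeps the algebra lighter at each step and, by extracting the common factor $(1+x)(d-t)$, exposes a structural link between $u$ and $w$ that your two separate identities do not make visible, which is also why the paper reuses the same reparametrization downstream. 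Either proof is acceptable; neither is a strict improvement on the other.
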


\begin{proof}
By~\eqref{trace condition} element $ (1, 1) $ of $ A( \sigma ) $ is nonnegative. It suffices to show that $ u( \sigma ) > 0 $, $ v( \sigma ) \geq 0 $ and $ w( \sigma ) \geq 0 $ in order to show that $ A( \sigma ) $ is nonnegative.

We first note that $ v( \sigma ) > 0 $, because by~\eqref{assumption 1}, \eqref{trace condition} and \eqref{assumption 3} we have $ \lambda_3 > 0 > \lambda_4 $ and $ \lambda_2 + \lambda_4 < 0 $. Therefore, the first three terms of $ v( \sigma ) $ are negative and the last three terms are positive. Together with the minus sign of $ v( \sigma ) $ we get a positive value.

By~\eqref{assumption 1}, \eqref{trace condition} and \eqref{assumption 3} $ \lambda_1 > 0 $. Note that \[ u( \sigma ) = {\lambda_1}^2 u \left( \frac{1}{\lambda_1} \cdot \sigma \right), \quad w( \sigma ) = {\lambda_1}^3 w \left( \frac{1}{\lambda_1}  \cdot \sigma \right) .\] Therefore, it suffices to show that $ u( \sigma ) > 0, w( \sigma ) \geq 0 $ under the assumption that $ \lambda_1 = 1 $.

We define a different parametrization of the spectrum $ \sigma $:
\[ x = \lambda_2, \quad y = \lambda_3, \quad d = \lambda_2 + \lambda_3 + \lambda_4, \quad t = e_1( \sigma ) .\] Then, $ \sigma = ( 1, x, y, d-x-y, -d-1+t ) $ and $ 1 \geq x \geq y \geq d-x-y \geq -d-1+t > -1 $. By~\eqref{assumption 1} $ d < t $ and by~\eqref{assumption 3} $ y > t $, so $ y \in \left( t, \min( x, -x+2d+1-t ) \right] $.

For given $ x, d, t $ we look at $ u, w $ as functions of $ y \in \mathbb{R} $. Then we have
\begin{align*}
u( y ) &= {y}^{2}+ \left( x-d \right) y-xd-d+dt+t-{t}^{2}, \\
w( y ) &= {y}^{2}+ \left( x-d \right) y-xdt-tx+{t}^{2}x.
\end{align*}

Obviously, these functions have an extremum at $ y = \frac{1}{2} ( d - x ) $. By~\eqref{assumption 1} and \eqref{assumption 3} we get $ -x+d-t < 0 $, so $ u( y ) $ and $ w( y ) $ have a minimum at this point. Moreover, as $ u( y ) $ and $ w( y ) $ are quadratic polynomials in $ y $, they are monotonically increasing when $ y > \frac{1}{2} ( d - x ) $.

We have $ x \geq y > t > d $, so $ \frac{1}{2} ( d - x ) < 0 $. By~\eqref{trace condition} $ t \geq 0 $, so $ t > \frac{1}{2} ( d - x ) $. Therefore, if we can show that for $ y = t $, $ u( y ) > 0 $ and $ w( y ) \geq 0 $, then it will be true for $ y > t $ as well.

In case $ y = t $ we get
\begin{align*}
u( t ) &= - \left( 1+x \right)  \left( d-t \right), \\
w( t ) &= -t \left( 1+x \right)  \left( d-t \right).
\end{align*}
As $ t \geq 0 $, $ t > d $ and $ x > 0 $, indeed $ u( t ) > 0 $ and $ w( t ) \geq 0 $. Therefore, $ A( \sigma ) $ is nonnegative. By Lemma~\ref{lem:spectrum_pattern1} the spectrum of the symmetric matrix $ A( \sigma ) $ is $ \sigma $, so $ \sigma $ is realizable by a nonnegative symmetric matrix.
\end{proof}

The condition $ \lambda_5 > -\lambda_1 $ in Lemma~\ref{lem:nonnegative_pattern1} is crucial, as the following lemma shows:
\begin{lemma} \label{lem:unrealizable_spectrum}
Let $ \sigma = \left( \lambda_1, \lambda_2, \dots, \lambda_5 \right) $ be a list of real numbers. Assume $ \sigma $ meets conditions \eqref{trace condition}, \eqref{assumption 3} and
\begin{align}
\lambda_1 \geq \lambda_2 \geq \dots \geq \lambda_5 = -\lambda_1 . \label{assumption u1}
\end{align}
Then $ \sigma $ is not realizable by a $ 5 \times 5 $ nonnegative matrix.
\end{lemma}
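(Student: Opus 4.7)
The approach is to assume for contradiction that $A$ is a $5\times 5$ nonnegative matrix realizing $\sigma$, then exploit the fact that $\lambda_5=-\lambda_1$ forces a strong spectral-circle structure on $A$. First I would extract three algebraic consequences of the hypotheses. Since $\lambda_3>e_1\geq 0$, we have $\lambda_3>0$. Because $\lambda_1+\lambda_5=0$, $e_1=\lambda_2+\lambda_3+\lambda_4$, so \eqref{assumption 3} rewrites as $\lambda_2+\lambda_4<0$, and in particular $-\lambda_4>\lambda_2$. Finally, $\lambda_1>\lambda_2$: otherwise $\lambda_2=\lambda_1$ would give $\lambda_4<-\lambda_2=\lambda_5$, contradicting $\lambda_4\geq\lambda_5$. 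Thus $\lambda_1$ is a simple eigenvalue of $A$.

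I would then pass to the Frobenius normal form of $A$: after a permutation similarity, $A$ is block upper triangular with each diagonal block either $1\times 1$ or irreducible of size $\geq 2$, and the spectrum of $A$ is the union, with multiplicity, of the spectra of these blocks. Simplicity of $\lambda_1$ means that exactly one diagonal block $B$ has $\lambda_1$ as an eigenvalue, and it is the unique block with Perron eigenvalue equal to $\lambda_1$. Any block containing $-\lambda_1$ has Perron at least $|-\lambda_1|=\lambda_1$ and at most $\lambda_1$, hence exactly $\lambda_1$, and must therefore coincide with $B$. Thus $B$ has size $s\geq 2$, is irreducible, and has $\pm\lambda_1$ in its spectrum. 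By Frobenius's theorem the spectrum of $B$ is invariant under multiplication by $e^{2\pi\imath/h}$, where $h$ is the index of imprimitivity; since $-\lambda_1$ lies in this spectrum and the spectrum is real, the only admissible value is $h=2$, so the entire spectrum of $B$ is invariant under $\lambda\mapsto-\lambda$.

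It remains to rule out each $s\in\{2,3,4,5\}$. For $s=5$, negation-invariance of $\sigma$ forces one of $\lambda_2,\lambda_3,\lambda_4$ to be zero, contradicting $\lambda_2\geq\lambda_3>0>\lambda_4$. For $s=4$, the spectrum of $B$ is $\{\lambda_1,\mu,-\mu,-\lambda_1\}$ with $\mu\in[0,\lambda_1]$ and the remaining $1\times 1$ block contributes $c\geq 0$; sorting yields $\lambda_4=-\mu$ and $\{\lambda_2,\lambda_3\}=\{\mu,c\}$, so $e_1=c$ while $\lambda_3=\min(\mu,c)\leq c$, contradicting $\lambda_3>e_1$. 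For $s=3$ the spectrum of $B$ must be $\{\lambda_1,0,-\lambda_1\}$, but $0$ cannot appear among $\lambda_2,\lambda_3,\lambda_4$. For $s=2$ the eigenvalues $\{\lambda_2,\lambda_3,\lambda_4\}$ must be realized by diagonal blocks of total size $3$, namely three $1\times 1$ blocks, a $2\times 2$ block together with a $1\times 1$ block, or a single $3\times 3$ block; the first is excluded because $\lambda_4<0$, and in the remaining two the Perron eigenvalue $p$ of the nontrivial irreducible block lies in $\{\lambda_2,\lambda_3\}$ yet must satisfy $p\geq|\lambda_4|=-\lambda_4>\lambda_2$, a contradiction.

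The conceptual engine is the equivalence $\lambda_3>e_1\Longleftrightarrow\lambda_2+\lambda_4<0$, which makes the Perron-Frobenius bound $-\lambda_4\leq\text{Perron}$ fail inside any irreducible block forced to contain $\lambda_4$. The main bookkeeping step is the $s=2$ case, where one must enumerate the possible partitions of the three-dimensional complement of $B$ and apply the Perron bound in each.
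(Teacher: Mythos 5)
Your proof is correct and rests on the same core ideas as the paper's: the Perron--Frobenius bound forcing any block containing $\lambda_5=-\lambda_1$ to have Perron root $\lambda_1$, the index-of-imprimitivity argument giving $h=2$ and hence negation-invariance of that block's spectrum, and the observation that $\lambda_4$ must then sit in some other block whose Perron root would have to exceed $-\lambda_4>\lambda_2$, which is impossible. The paper phrases this as a single irreducible/reducible dichotomy and argues directly that the subset containing $\lambda_5$ can contain no element besides $\lambda_1$ while the subset containing $\lambda_4$ must also contain $\lambda_1$; you instead enumerate the possible sizes $s\in\{2,3,4,5\}$ of the block carrying $\pm\lambda_1$ and kill each case, which is a bit more verbose but equivalent. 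One small genuine difference: you derive the simplicity of $\lambda_1$ (i.e.\ $\lambda_1>\lambda_2$) purely from the hypotheses ($\lambda_2=\lambda_1$ would force $\lambda_4<\lambda_5$), whereas the paper derives it from the Perron--Frobenius inequality $\lambda_1+\lambda_4\ge 0$ applied to the assumed realization. Your version is slightly cleaner since it does not presuppose realizability, but both are fine.
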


\begin{proof}
By~\eqref{trace condition}, \eqref{assumption 3} and \eqref{assumption u1} we have that $ \lambda_3 > 0 $ and $ \lambda_3 + \lambda_4 \leq \lambda_2 + \lambda_4 < 0 $. Assume that $ \sigma $ is realizable by a $ 5 \times 5 $ nonnegatie matrix $ A $. By the Perron-Frobenius Theorem $ \lambda_1 + \lambda_4 \geq 0 $, and so $ \lambda_2 < \lambda_1 $. If $ A $ is irreducible then it has an index of cyclicity $ h $. As the spectrum of $ A $ contains only real values and as $ \lambda_5 = -\lambda_1 $, then by the Perron-Frobenius Theorem we must have that $ h = 2 $ and that $ \sigma $ is invariant under multiplication by $ -1 $. However, this contradicts the fact that $ \lambda_3 > 0 $. If $ A $ is reducible then $ \sigma $ can be partitioned into disjoint subsets (in terms of element names and not necessarily element values) forming the spectra of irreducible nonnegative matrices. A subset containing $ \lambda_5 $ must contain $ \lambda_1 $ by the Perron-Frobenius Theorem. Such a subset of $ \sigma $ must also be invariant under multiplication by $ -1 $. As no other element of $ \sigma $ is zero and no two other elements of $ \sigma $ are opposite to one another, such a subset cannot contain any other element of $ \sigma $. On the other hand, a subset containing $ \lambda_4 $ must contain $ \lambda_1 $ by the Perron-Frobenius Theorem. Therefore, $ \sigma $ cannot be partitioned as requried, so $ A $ cannot be reducible. We conclude that no $ 5 \times 5 $ nonnegative matrix $ A $ can realize $ \sigma $. 

\end{proof}

\begin{lemma} \label{lem:nonnegative_pattern2}
Let $ \sigma = \left( \lambda_1, \lambda_2, \dots, \lambda_5 \right) $ be a list of real numbers. Assume $ \sigma $ meets conditions \eqref{trace condition}, \eqref{assumption 3} and \eqref{monotonicity condition}. Further assume that $ g $ is a real root of the polynomial $ Q_\sigma( z ) $ and
\begin{equation}
0 \leq g \leq \frac{1}{2} e_1( \sigma ). \label{assumption 6}
\end{equation}
Then $ B( \sigma, g ) $ is nonnegative, and so $ \sigma $ is realizable by a nonnegative symmetric matrix.
\end{lemma}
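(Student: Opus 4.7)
The spectrum of $B(\sigma,g)$ is $\sigma$ by Lemma~\ref{lem:spectrum_pattern2}, so it remains to prove that every entry of $B(\sigma,g)$ is nonnegative. Inspecting the matrix, the distinct entries are (up to symmetry) $g$, $e_1 - 2g$, $\sqrt{l}$, $\sqrt{m}$, $k$, and $0$. The task therefore reduces to establishing
\[
g \geq 0, \quad e_1 - 2g \geq 0, \quad l \geq 0, \quad m \geq 0, \quad k \geq 0.
\]
The first two inequalities are immediate from \eqref{assumption 6}. The remaining three translate into the geometric conditions $\lambda_5 \leq g \leq \lambda_3$ (for $l \geq 0$), $g \geq \lambda_3 + \lambda_5$ (for $k \geq 0$), and the quadratic inequality $-g^2 + e_1 g - \tfrac{1}{2}(e_2 + \lambda_3^2 + \lambda_5^2) \geq 0$ (for $m \geq 0$).

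First I would record the elementary consequences of the hypotheses. Combining \eqref{trace condition} with \eqref{assumption 3} gives $\lambda_3 > e_1 \geq 0$, and then \eqref{monotonicity condition} yields $\lambda_4 + \lambda_5 = e_1 - \lambda_3 - (\lambda_1 + \lambda_2) < -(\lambda_1 + \lambda_2) \leq -2\lambda_3 < 0$, so in particular $\lambda_5 < 0$. Next I would rewrite $Q_\sigma$ in the factored form
\[
Q_\sigma(z) \;=\; 2z(z - \lambda_3)(z - \lambda_5) \;-\; (e_2 + \lambda_3^2 + \lambda_5^2)\,z \;+\; e_3 + e_1(\lambda_3^2 + \lambda_5^2),
\]
which is a direct expansion, and observe that $-2z\,m(z) = (e_2 + \lambda_3^2 + \lambda_5^2)z - 2e_1 z^2 + 2z^3$. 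Combining these with $Q_\sigma(g) = 0$ yields a clean algebraic relation between $gl$, $gm$, and the elementary symmetric functions of $\sigma$ that holds at the root.

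The plan is then twofold: (i) use sign analysis of $Q_\sigma$ at the key points $\lambda_3$, $\lambda_5$, $\lambda_3 + \lambda_5$, $0$, and $e_1/2$ to show that the real root $g \in [0, e_1/2]$ singled out by \eqref{assumption 6} must in fact lie in $[\max(0,\lambda_3 + \lambda_5),\, \lambda_3]$, which delivers $l \geq 0$ and $k \geq 0$ at once; (ii) combine the derived algebraic relation from $Q_\sigma(g) = 0$ with these bounds on $g$ (and hence on $l$) to verify $m \geq 0$.

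The expected main obstacle is step (ii): the quadratic $m(g)$ does not have a sign determined by the hypotheses in isolation, and the proof must genuinely use $Q_\sigma(g) = 0$. If the direct algebraic manipulation proves awkward, a fallback is to exploit the involution $(1\,4)(2\,5)$ preserving $B(\sigma,g)$, which block-diagonalizes $B$ into a $2\times 2$ block with eigenvalues $\{\lambda_3, \lambda_5\}$ and a $3\times 3$ block with eigenvalues $\{\lambda_1, \lambda_2, \lambda_4\}$; the sign of $m$ can then be extracted from eigenvalue interlacing on the $3\times 3$ block together with the already-established bounds on $g$, $l$, and $k$.
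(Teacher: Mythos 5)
Your proposal is a plan, not a completed proof, and the plan is built around a misconception that would send you down the wrong path. You write that ``the quadratic $m(g)$ does not have a sign determined by the hypotheses in isolation, and the proof must genuinely use $Q_\sigma(g)=0$.'' That is false, and recognizing it is the crux of the lemma. The hypothesis $0 \leq g \leq \tfrac{1}{2}e_1(\sigma)$, together with the ordering and trace assumptions on $\sigma$, already pins down the sign of $m(\sigma,g)$. Treat $m(\sigma,z)$ as a downward-opening quadratic in $z$; its discriminant simplifies to
\[
\Delta = e_1^2 - 2\bigl(e_2 + \lambda_3^2 + \lambda_5^2\bigr) = (\lambda_1^2 - \lambda_5^2) + \lambda_2^2 + (\lambda_4^2 - \lambda_3^2),
\]
and since the hypotheses force $-\lambda_1 \leq \lambda_5 \leq \lambda_4 < -\lambda_3 < 0$ (the middle inequality coming from $\lambda_2+\lambda_4<0$, a consequence of $\lambda_3 > e_1$ and $\lambda_1+\lambda_5\geq 0$), one gets $\Delta \geq \lambda_2^2 > 0$. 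The two real roots are $z_0^{\pm} = \tfrac{1}{2}e_1 \pm \tfrac{1}{2}\sqrt{\Delta}$, and $m\geq 0$ precisely on $[z_0^-, z_0^+]$. A short estimate using $\sqrt{\Delta}\geq\lambda_2$ gives $z_0^- \leq \tfrac{1}{2}(\lambda_1+\lambda_3+\lambda_4+\lambda_5) < 0 \leq g \leq \tfrac{1}{2}e_1 \leq z_0^+$, so $m(\sigma,g)\geq 0$ for \emph{every} $g\in[0,\tfrac12 e_1]$, with no appeal to $Q_\sigma(g)=0$ whatsoever. The role of $Q_\sigma(g)=0$ in this lemma is confined to Lemma~\ref{lem:spectrum_pattern2}, i.e.\ to the claim that $B(\sigma,g)$ has spectrum $\sigma$; it is not needed for nonnegativity.

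A secondary, less serious issue: your step~(i) also overcomplicates matters. You do not need sign analysis of $Q_\sigma$ at $\lambda_3$, $\lambda_5$, $\lambda_3+\lambda_5$, etc.\ to locate $g$. From $\lambda_3 > e_1 \geq \tfrac{1}{2}e_1 \geq g \geq 0 > \lambda_5$ you get $\lambda_5 < g < \lambda_3$ directly, hence $l(\sigma,g) > 0$, and from $\lambda_3+\lambda_5 \leq \lambda_2+\lambda_4 < 0 \leq g$ you get $k(\sigma,g)>0$ directly. The inclusion $[0,\tfrac12 e_1]\subset[\max(0,\lambda_3+\lambda_5),\lambda_3]$ that you wanted to extract from $Q_\sigma$ is already an immediate consequence of the stated hypotheses. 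Your fallback via the involution $(1\,4)(2\,5)$ is mathematically sound as a structural observation (the $2\times 2$ block has trace $\lambda_3+\lambda_5$ and determinant $\lambda_3\lambda_5$ for \emph{any} $g$), but you did not carry it through, and it is unnecessary once the discriminant argument above is in hand.
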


\begin{proof}
By~\eqref{assumption 6} the main diagonal of $ B( \sigma, g ) $ is nonnegative. By~\eqref{monotonicity condition}, \eqref{trace condition} and \eqref{assumption 3} we have $ \lambda_3 > e_1( \sigma ) \geq \frac{1}{2} e_1( \sigma ) \geq g \geq 0 > \lambda_5 $ and $ \lambda_3 + \lambda_5 < 0 \leq g $. Therefore, $ l( \sigma, g ) > 0 $ and $ k( \sigma, g ) > 0 $. Note that $ m( \sigma, z ) $ can be regarded as a quadratic polynomial in $ z $. Its discriminant is
\begin{align*}
\Delta &= {e_1( \sigma )}^2 - 2 \left( e_2( \sigma ) + {\lambda_3}^{2} + {\lambda_5}^{2} \right) \\
 &= \left( {\lambda_4}^2 - {\lambda_3}^2 \right) + {\lambda_2}^2 + \left( {\lambda_1}^2 - {\lambda_5}^2 \right) .
\end{align*}
By~\eqref{monotonicity condition}, \eqref{trace condition} and \eqref{assumption 3} we have $ -\lambda_1 \leq \lambda_5 \leq \lambda_4 < -\lambda_3 < 0 $. Therefore, $ \Delta \geq {\lambda_2}^2 > 0 $ and so $ m( \sigma, z ) $ has two real roots:
\[ z_0^{\pm} = \frac{1}{2} e_1( \sigma ) \pm \frac{1}{2} \sqrt{ \Delta } .\]
As the coefficient of $ z^2 $ in $ m( \sigma, z ) $ is negative, $ m( \sigma, z ) \geq 0 $ for any $ z \in [ z_0^-, z_0^+ ] $. Also, we know that
\begin{align*}
z_0^- & \leq \frac{1}{2} e_1( \sigma ) - \frac{1}{2} \lambda_2 \\
 &= \frac{1}{2} \left( \lambda_1 + \lambda_3 + \lambda_4 + \lambda_5 \right) \\
 & \leq \frac{1}{2} \left( \lambda_1 + \lambda_2 + \lambda_4 + \lambda_5 \right) < 0 \\
 & \leq g \leq \frac{1}{2} e_1( \sigma ) \leq z_0^+ .
\end{align*}
Therefore, $ m( \sigma, g ) \geq 0 $, and so $ B( \sigma, g ) $ is nonnegative. By Lemma~\ref{lem:spectrum_pattern2} the spectrum of the symmetric matrix $ B( \sigma, g ) $ is $ \sigma $, so $ \sigma $ is realizable by a nonnegative symmetric matrix.
\end{proof}

\section{Guo Perturbations of Symmetric Nonnegative Matrices} \label{sec:perturbations}

Let the list of complex numbers $ \sigma = \left( \lambda_1, \lambda_2, \dots, \lambda_n \right) $ be the spectrum of an $ n \times n $ nonnegative real matrix with $ \lambda_1 $ its Perron eigenvalue. Let $ s $ be a positive real number and let $ i \in \{ 2, 3, \dots, n \} $. We define the following lists of values:
\begin{itemize}
\item $ \sigma_i^{+s} $ is the list $ \sigma $ with $ \lambda_1 $ replaced by $ \lambda_1 + s $ and $ \lambda_i $ replaced by $ \lambda_i + s $,
\item $ \sigma_i^{-s} $ is the list $ \sigma $ with $ \lambda_1 $ replaced by $ \lambda_1 + s $ and $ \lambda_i $ replaced by $ \lambda_i - s $.
\end{itemize}

In \cite{RefWorks:21} Guo proved that if a list of complex numbers $ \sigma = \left( \lambda_1, \lambda_2, \dots, \lambda_n \right) $ is the spectrum of an $ n \times n $ nonnegative real matrix with $ \lambda_1 $ its Perron eigenvalue and $ \lambda_i \in \mathbb{R} $, then $ \sigma_i^{+s} $ and $ \sigma_i^{-s} $ are also realizable by an $ n \times n $ nonnegative real matrix. In this section we prove similar results for several families of $ 5 \times 5 $ symmetric nonnegative matrices.

In \cite{RefWorks:79} the eigenvalues of trace zero $ 5 \times 5 $ symmetric nonnegative matrices were fully characterized. A descending list of real numbers $ \lambda_{1} \geq \lambda_{2} \geq \lambda_{3} \geq \lambda_{4} \geq \lambda_{5} \geq -\lambda_{1} $ is realizable by a trace zero $ 5 \times 5 $ symmetric nonnegative matrix if and only if the following conditions hold:

\begin{enumerate}
\item $ \sum_{i=1}^{5} \lambda_{i} = 0 $,
\item $ \sum_{i=1}^{5} \lambda_{i}^3 \geq 0 $,
\item $ \lambda_{2} + \lambda_{5} \leq 0 $.
\end{enumerate}

\begin{theorem}
Let a list of real numbers $ \sigma = \left( \lambda_1, \lambda_2, \dots, \lambda_5 \right) $, where $ \lambda_1 \geq \lambda_2 \geq \dots \geq \lambda_5 \geq -\lambda_1 $, be the spectrum of a trace zero $ 5 \times 5 $ symmetric nonnegative matrix. Then $ \sigma_i^{-s} $ is realizable by a trace zero $ 5 \times 5 $ symmetric nonnegative matrix for any positive real number $ s $ and any $ i \in \{ 2, 3, 4, 5 \} $.
\end{theorem}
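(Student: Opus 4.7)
The plan is to appeal to the characterization stated just before the theorem: a descending list with $\lambda_5 \geq -\lambda_1$ is the spectrum of a trace zero $5 \times 5$ symmetric nonnegative matrix iff it is trace zero, has nonnegative sum of cubes, and satisfies $\lambda_2 + \lambda_5 \leq 0$. Since $\sigma$ is realizable, all three conditions hold for it, and I will verify them for the descending rearrangement $\mu_1 \geq \mu_2 \geq \mu_3 \geq \mu_4 \geq \mu_5$ of $\sigma_i^{-s}$.

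The trace is preserved because $s$ is simultaneously added to $\lambda_1$ and subtracted from $\lambda_i$; moreover $\lambda_1 + s$ is plainly the largest entry of $\sigma_i^{-s}$, so $\mu_1 = \lambda_1 + s$. The sum of cubes condition reduces to the algebraic identity
\[
\sum_j (\sigma_i^{-s})_j^3 - \sum_j \lambda_j^3 = (\lambda_1 + s)^3 - \lambda_1^3 + (\lambda_i - s)^3 - \lambda_i^3 = 3 s (\lambda_1 + \lambda_i)(\lambda_1 - \lambda_i + s),
\]
whose right-hand side is nonnegative because $\lambda_1 \geq \lambda_i$ and $\lambda_1 + \lambda_i \geq \lambda_1 + \lambda_5 \geq 0$ (the last using the hypothesis $\lambda_5 \geq -\lambda_1$).

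The remaining verifications are the Perron bound $\mu_5 \geq -(\lambda_1 + s)$ and $\mu_2 + \mu_5 \leq 0$, which require a short case split on where $\lambda_i - s$ lands in the sorted order. The Perron bound is immediate: if $\mu_5 \neq \lambda_i - s$ then $\mu_5 \geq \lambda_5 \geq -\lambda_1 \geq -(\lambda_1 + s)$, while if $\mu_5 = \lambda_i - s$ then $\mu_5 \geq \lambda_5 - s \geq -\lambda_1 - s$. For $\mu_2 + \mu_5 \leq 0$, if $\lambda_i - s$ does not sink below $\lambda_5$ then $\mu_5 = \lambda_5$ and $\mu_2 \leq \lambda_2$, yielding $\mu_2 + \mu_5 \leq \lambda_2 + \lambda_5 \leq 0$. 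Otherwise $\lambda_i - s \leq \lambda_5$ gives $s \geq \lambda_i - \lambda_5$, and with $\mu_2$ equal to $\lambda_3$ (when $i = 2$) or to $\lambda_2$ (when $i \geq 3$) one checks directly that the bound $s \geq \lambda_i - \lambda_5$ collapses $\mu_2 + \mu_5$ to an expression at most $\lambda_2 + \lambda_5 \leq 0$.

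There is no serious obstacle; the only thing to track is the identity of $\mu_2$ across the four values of $i$, and every sub-case reduces to the original inequality $\lambda_2 + \lambda_5 \leq 0$. This also explains why the theorem is confined to the ``$-s$'' perturbation: the companion $\sigma_i^{+s}$ would add $2s$ to the trace and so cannot produce a trace zero spectrum at all.
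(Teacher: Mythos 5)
Your proof is correct and follows essentially the same route as the paper: both appeal directly to the three-condition characterization of trace-zero $5\times5$ symmetric nonnegative spectra quoted immediately before the theorem, use the same algebraic identity $(\lambda_1+s)^3+(\lambda_i-s)^3 - \lambda_1^3-\lambda_i^3 = 3s(\lambda_1+\lambda_i)(\lambda_1-\lambda_i+s)\geq 0$ for the sum-of-cubes condition, and observe that the new second-largest and smallest entries can only decrease so that $\mu_2+\mu_5\leq\lambda_2+\lambda_5\leq 0$. You are slightly more explicit than the paper in tracking the re-sorting and in checking that $\mu_5\geq-\mu_1$ still holds, which the paper leaves implicit, but there is no substantive difference in method.
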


\begin{proof}
The sum of elements of $ \sigma $ and of $ \sigma_i^{-s} $ is the same, so the first condition is met for $ \sigma_i^{-s} $. As $ s > 0 $ we know that $ \lambda_{2} $ and $ \lambda_{5} $ can only get smaller, so the third condition holds for $ \sigma_i^{-s} $ as well. Finally, as $ -\lambda_{1} \leq \lambda_{i} \leq \lambda_{1} $ we get that $ \lambda_{1} + \lambda_{i} \geq 0 $ and $ \lambda_{1} - \lambda_{i} \geq 0 $. Therefore, 
\[
\left( \lambda_{{1}}+s \right) ^{3}+ \left( \lambda_{{i}}-s \right) ^{3} =
3 s \left( \lambda_{{1}} + \lambda_{{i}} \right)  \left( \lambda_{{1}}+
s-\lambda_{{i}} \right) + {\lambda_{{1}}}^{3} + {\lambda_{{i}}}^{3} \geq
{\lambda_{{1}}}^{3} + {\lambda_{{i}}}^{3}.
\]
We deduce that the second condition is also met for $ \sigma_i^{-s} $. Therefore, $ \sigma_i^{-s} $ is realizable by a trace zero $ 5 \times 5 $ symmetric nonnegative matrix.
\end{proof}

\begin{theorem} \label{th:Guo_known_region}
Let a list of real numbers $ \sigma = \left( \lambda_1, \lambda_2, \dots, \lambda_5 \right) $, where $ \lambda_1 \geq \lambda_2 \geq \dots \geq \lambda_5 \geq -\lambda_1 $ and $ \lambda_3 \leq \sum_{i=1}^{5} \lambda_i $, be the spectrum of a $ 5 \times 5 $ symmetric nonnegative matrix. Then $ \sigma_i^{+s} $ and $ \sigma_i^{-s} $ are realizable by a $ 5 \times 5 $ symmetric nonnegative matrix for any positive real number $ s $ and any $ i \in \{ 2, 3, 4, 5 \} $.
\end{theorem}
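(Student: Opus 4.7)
My plan is to exploit the hypothesis $\lambda_3 \leq e_1(\sigma)$, which places $\sigma$ inside the known realizable region of SNIEP for $n = 5$: by the characterization recalled at the end of Section~\ref{sec:preliminaries} (and made explicit in the generic sub-case by Lemma~\ref{lem:SNIEP_n=5_MN}), inside this region the Perron condition~\eqref{PF condition}, the trace condition~\eqref{trace condition}, and the McDonald--Neumann condition~\eqref{MN condition} are jointly sufficient for realizability by a symmetric nonnegative matrix. So the task reduces to checking, for every one of the eight perturbed lists $\sigma_i^{\pm s}$ with $i \in \{2,3,4,5\}$, that after reordering they (i) still satisfy the known-region inequality $\mu_3 \leq e_1(\sigma')$ and (ii) still satisfy \eqref{PF condition}, \eqref{trace condition}, and \eqref{MN condition}.

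First I would record the data already available from $\sigma$: Perron--Frobenius gives $\lambda_1 + \lambda_j \geq 0$ for every $j$, the trace inequality gives $e_1(\sigma) \geq 0$, Lemma~\ref{lem:MN condition} gives $\lambda_1 + \lambda_3 + \lambda_4 \geq 0$, and the hypothesis itself is equivalent to $\lambda_1 + \lambda_2 + \lambda_4 + \lambda_5 \geq 0$. For $\sigma_i^{+s}$ the new Perron is $\lambda_1 + s$ and the trace grows by $2s$; for $\sigma_i^{-s}$ the new Perron is still $\lambda_1 + s$, where the dominance of $-(\lambda_i - s)$ is precisely $\lambda_1 + \lambda_i \geq 0$, and the trace is unchanged. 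So \eqref{PF condition} and \eqref{trace condition} are immediate in all eight cases.

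The substantive step is a case split on where the perturbed entry $\lambda_i \pm s$ lands in the new sorted list $\mu_1 \geq \cdots \geq \mu_5$. The unifying trick is that whenever the perturbed entry crosses some $\lambda_j$, the crossing is triggered by an inequality of the form $s > \lambda_j - \lambda_i$ or $s > \lambda_i - \lambda_j$, and this surplus is exactly what absorbs the mismatch between the new sum $\mu_1 + \mu_3 + \mu_4$ and the original $\lambda_1 + \lambda_3 + \lambda_4$. For instance, in $\sigma_2^{-s}$ with $\lambda_2 - s < \lambda_5$ the crossing gives $s + \lambda_5 > \lambda_3$, so $\mu_1 + \mu_3 + \mu_4 = (\lambda_1 + s) + \lambda_4 + \lambda_5 > \lambda_1 + \lambda_4 + \lambda_3 \geq 0$. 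The known-region bound is handled similarly: for $\sigma_i^{-s}$ the third largest can only drop relative to $\lambda_3$, and for $\sigma_i^{+s}$ the only delicate sub-case $\mu_3 = \lambda_2$ (which requires $s > \lambda_2 - \lambda_3$) is handled by the identity $\lambda_1 + 2\lambda_2 - \lambda_3 + \lambda_4 + \lambda_5 = (e_1 - \lambda_3) + (\lambda_2 - \lambda_3) \geq 0$, which rearranges to $\lambda_2 \leq e_1 + 2(\lambda_2 - \lambda_3) < e_1 + 2s$.

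The main obstacle is pure bookkeeping: four choices of $i$, two signs, and up to four possible landing positions per case yield on the order of a dozen sub-cases. In each of them the verification collapses to combining the four inputs $\lambda_1 + \lambda_3 + \lambda_4 \geq 0$, $e_1 - \lambda_3 \geq 0$, $\lambda_1 + \lambda_i \geq 0$, and the single crossing inequality that defines the sub-case; no deeper idea is required beyond organizing the case analysis cleanly.
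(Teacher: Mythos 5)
Your plan is in the right spirit, and the condition-checking you sketch for the perturbed lists is correct as far as it goes. But the load-bearing claim --- that for a monotone list with $\lambda_5 \geq -\lambda_1$ and $\lambda_3 \leq e_1(\sigma)$, conditions \eqref{PF condition}, \eqref{trace condition} and \eqref{MN condition} are jointly sufficient for symmetric realizability --- is not something the paper states or supplies a reference for. The remark at the end of Section~\ref{sec:preliminaries} only identifies which region is unknown, and Lemma~\ref{lem:SNIEP_n=5_MN} covers only the sub-case $\lambda_2 > 0 \geq \lambda_3$. To make your reduction usable you must first prove this auxiliary claim, and its proof is precisely the case split that the paper's own argument carries out: when $\lambda_3 > 0$, delete $\lambda_3$ (here $\lambda_3 \leq e_1$ is what gives the remaining $4$-tuple nonnegative trace), realize the remainder by Theorem~\ref{th:SNIEP_n=4}, and take a direct sum with $(\lambda_3)$; when $\lambda_2 > 0 \geq \lambda_3$, invoke Lemma~\ref{lem:SNIEP_n=5_MN}; when $\lambda_2 \leq 0$, invoke Theorem~\ref{th:Suleimanova}.

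Once that lemma is stated and proved, your argument becomes a legitimate reorganization of the paper's: you factor out a characterization of the known region and then verify that each perturbed list stays inside it, where the paper instead interleaves the realizability constructions (direct sums, Lemma~\ref{lem:SNIEP_n=5_MN}, Theorem~\ref{th:Suleimanova}) with the case split on $i$, the sign of the perturbation, and the sign of the relevant entries. Your MN verification via the crossing inequality (the $\sigma_2^{-s}$ example is a correct template) and your $\mu_3 \leq e_1 + 2s$ argument via the identity $\lambda_1 + 2\lambda_2 - \lambda_3 + \lambda_4 + \lambda_5 = (e_1 - \lambda_3) + (\lambda_2 - \lambda_3) \geq 0$ are both right. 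Note, though, that the bookkeeping you are deferring is genuinely the bulk of the work here, and the paper spends a comparable number of sub-cases on it; the factoring buys readability, not a shortcut.
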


\begin{proof}
Let $ \sigma = \left( \lambda_1, \lambda_2, \dots, \lambda_5 \right) $ be the spectrum of a $ 5 \times 5 $ symmetric nonnegative matrix and meet conditions \eqref{monotonicity condition} and
\begin{align}
\lambda_3 \leq e_1( \sigma ). \label{assumption k3}
\end{align}

By Lemma~\ref{lem:MN condition} condition \eqref{MN condition} is met. By~\eqref{monotonicity condition} we have $ \max_{\lambda \in \sigma_i^{+s}} \lvert \lambda \rvert = \lambda_1 + s \in \sigma_i^{+s} $ and $ \max_{\lambda \in \sigma_i^{-s}} \lvert \lambda \rvert = \lambda_1 + s \in \sigma_i^{-s} $ for any $ i \in \{ 2, 3, 4, 5 \} $. We know that \eqref{trace condition} is met for $ \sigma $, so we have $ e_1( \sigma_i^{+s} ) \geq e_1( \sigma ) \geq 0 $ and $ e_1( \sigma_i^{-s} ) = e_1( \sigma ) \geq 0 $ for any $ i \in \{ 2, 3, 4, 5 \} $. Therefore, conditions \eqref{PF condition} and \eqref{trace condition} are met by $ \sigma_i^{+s} $ and $ \sigma_i^{-s} $. We look into the following cases:

\begin{enumerate}

\item $ \lambda_3 > 0 $.

\begin{enumerate}

\item $ i \in \{ 2, 4, 5 \} $. \label{case:Guo_known_1}

In this case let $ \sigma' $ be the perturbed $ \sigma $ ($ \sigma_i^{+s} $ or $ \sigma_i^{-s} $) without the element $ \lambda_3 $. By~\eqref{assumption k3} $ e_1( \sigma' ) \geq \lambda_1 + \lambda_2 + \lambda_4 + \lambda_5 \geq 0 $. Also, $ \max_{\lambda \in \sigma'} \lvert \lambda \rvert = \lambda_1 + s \in \sigma' $. Therefore, $ \sigma' $ meets conditions \eqref{PF condition} and \eqref{trace condition}, so by Theorem~\ref{th:SNIEP_n=4} there exists a $ 4 \times 4 $ symmetric nonnegative matrix $ A_1 $ realizing $ \sigma' $. Hence, $ \sigma $ is realizable by the $ 5 \times 5 $ symmetric nonnegative matrix $ A = A_1 \oplus \left( \lambda_3 \right) $.

\item $ i = 3 $.

\begin{enumerate}

\item $ \sigma_3^{+s} $.

This is similar to case~\ref{case:Guo_known_1} with $ \sigma' = \left( \lambda_1 + s, \lambda_2, \lambda_4, \lambda_5 \right) $ and $ A = A_1 \oplus \left( \lambda_3 + s \right) $.

\item $ \sigma_3^{-s} $ and $ \lambda_3 - s \geq 0 $.

This is similar to case~\ref{case:Guo_known_1} with $ \sigma' = \left( \lambda_1 + s, \lambda_2, \lambda_4, \lambda_5 \right) $ and $ A = A_1 \oplus \left( \lambda_3 - s \right) $.

\item $ \sigma_3^{-s} $ and $ \lambda_3 - s < 0 $ and $ \lambda_4 \leq 0 $. \label{case:Guo_known_2}

In this case $ \sigma_3^{-s} $ has exactly two positive elements: $ \lambda_1 + s $ and $ \lambda_2 $. If $ \lambda_3 - s \geq \lambda_5 $ then the sum of the first, third and fourth largest elements in $ \sigma_3^{-s} $ is $ ( \lambda_1 + s ) + ( \lambda_3 - s ) + \lambda_4 = \lambda_1 + \lambda_3 + \lambda_4 $. If $ \lambda_3 - s < \lambda_5 $ then the sum of the first, third and fourth largest elements in $ \sigma_3^{-s} $ is $ ( \lambda_1 + s ) + \lambda_4 + \lambda_5 > ( \lambda_1 + s ) + \lambda_4 + ( \lambda_3 - s ) = \lambda_1 + \lambda_3 + \lambda_4 $. In both cases by \eqref{MN condition} this sum is nonnegative, so $ \sigma_3^{-s} $ meets condition \eqref{MN condition}. Therefore, by Lemma~\ref{lem:SNIEP_n=5_MN}, $ \sigma_3^{-s} $ is realizable by a $ 5 \times 5 $ symmetric nonnegative matrix.

\item $ \sigma_3^{-s} $ and $ \lambda_3 - s < 0 $ and $ \lambda_4 > 0 $.

This is similar to case~\ref{case:Guo_known_1} with $ \sigma' = \left( \lambda_1 + s, \lambda_2, \lambda_3 - s, \lambda_5 \right) $ and $ A = A_1 \oplus \left( \lambda_4 \right) $.

\end{enumerate}

\end{enumerate}

\item $ \lambda_3 \leq 0 $.

\begin{enumerate}

\item $ i = 2 $.

\begin{enumerate}

\item $ \sigma_2^{+s} $ and $ \lambda_2 + s > 0 $. \label{case:Guo_known_3}

In this case $ \sigma_2^{+s} $ has exactly two positive elements: $ \lambda_1 + s $ and $ \lambda_2 + s $. By \eqref{MN condition} the sum of the first, third and fourth largest elements in $ \sigma_2^{+s} $ is $ ( \lambda_1 + s ) + \lambda_3 + \lambda_4 > \lambda_1 + \lambda_3 + \lambda_4 \geq 0 $, so $ \sigma_2^{+s} $ meets condition \eqref{MN condition}. Hence, by Lemma~\ref{lem:SNIEP_n=5_MN}, $ \sigma_2^{+s} $ is realizable by a $ 5 \times 5 $ symmetric nonnegative matrix.

\item $ \sigma_2^{+s} $ and $ \lambda_2 + s \leq 0 $. \label{case:Guo_known_4}

In this case $ \sigma_2^{+s} $ is realizable by Theorem~\ref{th:Suleimanova}, as it meets the conditions of this theorem.

\item $ \sigma_2^{-s} $ and $ \lambda_2 - s > 0 $.

This is similar to case~\ref{case:Guo_known_3}.

\item $ \sigma_2^{-s} $ and $ \lambda_2 - s \leq 0 $.

This is similar to case~\ref{case:Guo_known_4}.

\end{enumerate}

\item $ i \in \{ 3, 4 \} $.

\begin{enumerate}

\item $ \sigma_i^{+s} $ and $ \lambda_2 \leq 0 $ and $ \lambda_i + s \leq 0 $.

This is similar to case~\ref{case:Guo_known_4}.

\item $ \sigma_i^{+s} $ and $ \lambda_2 > 0 $ and $ \lambda_i + s \leq 0 $.

This is similar to case~\ref{case:Guo_known_3}.

\item $ \sigma_i^{+s} $ and $ \lambda_i + s > 0 $.

This is similar to case~\ref{case:Guo_known_1} with $ \sigma' = \left( \lambda_1 + s, \lambda_2, \lambda_{7-i}, \lambda_5 \right) $ and $ A = A_1 \oplus \left( \lambda_i + s \right) $.

\item $ \sigma_i^{-s} $ and $ \lambda_2 > 0 $.

This is similar to case~\ref{case:Guo_known_2}.

\item $ \sigma_i^{-s} $ and $ \lambda_2 \leq 0 $.

This is similar to case~\ref{case:Guo_known_4}.

\end{enumerate}

\item $ i = 5 $.

\begin{enumerate}

\item $ \sigma_5^{+s} $ and $ \lambda_2 \leq 0 $ and $ \lambda_5 + s \leq 0 $.

This is similar to case~\ref{case:Guo_known_4}.

\item $ \sigma_5^{+s} $ and $ \lambda_2 > 0 $ and $ \lambda_5 + s \leq 0 $.

This is similar to case~\ref{case:Guo_known_2}.

\item $ \sigma_5^{+s} $ and $ \lambda_5 + s > 0 $ and $ \lambda_2 \geq \lambda_5 + s $.

This is similar to case~\ref{case:Guo_known_1} with $ \sigma' = \left( \lambda_1 + s, \lambda_2, \lambda_3, \lambda_4 \right) $ and $ A = A_1 \oplus \left( \lambda_5 + s \right) $.

\item $ \sigma_5^{+s} $ and $ \lambda_5 + s > 0 $ and $ \lambda_2 < \lambda_5 + s $.

This is similar to case~\ref{case:Guo_known_1} with $ \sigma' = \left( \lambda_1 + s, \lambda_5 + s, \lambda_3, \lambda_4 \right) $ and $ A = A_1 \oplus \left( \lambda_2 \right) $. 

\item $ \sigma_5^{-s} $ and $ \lambda_2 > 0 $.

This is similar to case~\ref{case:Guo_known_3}.

\item $ \sigma_5^{-s} $ and $ \lambda_2 \leq 0 $.

This is similar to case~\ref{case:Guo_known_4}.

\end{enumerate}

\end{enumerate}

\end{enumerate}

\end{proof}

\begin{theorem} \label{th:small_Guo_pattern1}
Let $ \sigma = \left( \lambda_1, \lambda_2, \dots, \lambda_5 \right) $ be a list of real numbers. Assume $ \sigma $ meets the conditions of Lemma~\ref{lem:nonnegative_pattern1}. Then $ \sigma_i^{-s} $ meets the conditions of Lemma~\ref{lem:nonnegative_pattern1} for sufficiently small positive real number $ s $ and any $ i \in \{ 2, 3, 4, 5 \} $, and so is realizable by a $ 5 \times 5 $ symmetric nonnegative matrix.
\end{theorem}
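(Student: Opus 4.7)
The plan is a continuity argument verifying that each of the four hypotheses of Lemma~\ref{lem:nonnegative_pattern1} persists under a small decrement perturbation. The quantities $ e_1(\sigma) $, $ r(\sigma) $, and the $k$th order statistic of $\sigma$ are all continuous (in fact, polynomial or piecewise-linear) functions of the five eigenvalues, so I treat $ \sigma_i^{-s} $ as a continuous one-parameter family in $ s \geq 0 $ that agrees with $\sigma$ at $s = 0$.

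First, I observe that \eqref{trace condition} is preserved exactly: since the perturbation adds $s$ to $\lambda_1$ and subtracts $s$ from $\lambda_i$, we have $ e_1(\sigma_i^{-s}) = e_1(\sigma) \geq 0 $. Next, I verify \eqref{assumption 1} and \eqref{assumption 3}. Under the hypotheses of Lemma~\ref{lem:nonnegative_pattern1} we have $\lambda_1 > 0$ (as shown inside that lemma's proof), so $\lambda_1 + s$ remains strictly the largest entry of $\sigma_i^{-s}$ for small $s$, and the strict inequality $\lambda_5 > -\lambda_1$ ensures that the minimum of $\sigma_i^{-s}$, which tends to $\lambda_5$ as $s \to 0$, still strictly exceeds $-(\lambda_1 + s)$ for small $s$. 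Because \eqref{assumption 3} is the strict inequality $\lambda_3 > e_1(\sigma)$, and $e_1$ is invariant under the perturbation while the third-largest entry of $\sigma_i^{-s}$ varies continuously in $s$, condition \eqref{assumption 3} also persists for all sufficiently small $s > 0$.

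The remaining and most delicate condition is \eqref{assumption 4}, i.e. $r(\sigma_i^{-s}) \geq 0$. If $r(\sigma) > 0$ strictly, then by continuity the inequality holds for all sufficiently small $s$, and we are done. The real difficulty is the boundary case $r(\sigma) = 0$, in which one needs to show that $r$ cannot decrease as $s$ moves off $0$. The plan here is to compute the directional derivative $ \frac{d}{ds} r(\sigma_i^{-s})\bigr|_{s=0} $ directly from the formula $ r = e_3 + e_1 ( \lambda_2^2 + \lambda_5^2 ) $, using $ \partial e_3 / \partial \lambda_j = e_2 $ evaluated on the remaining four eigenvalues, and then to argue case by case in $ i \in \{2,3,4,5\} $ that this derivative is nonnegative, invoking the strict inequalities $\lambda_3 > e_1 \geq 0$, $\lambda_1 + \lambda_5 > 0$ and $\lambda_2 + \lambda_4 < 0$ already available from the hypotheses and the proof of Lemma~\ref{lem:nonnegative_pattern1}. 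The main obstacle is the sign analysis of this derivative in the cases $i = 2$ and $i = 4$, where neither factor in the product arising from the derivative of $e_3$ has an obvious sign without invoking the auxiliary inequalities; once the derivative is shown to be nonnegative in every case, a Taylor expansion $ r(\sigma_i^{-s}) = r(\sigma) + s \cdot \frac{d r}{ds}\bigr|_{s=0} + O(s^2) $ completes the proof, and Lemma~\ref{lem:nonnegative_pattern1} then provides the realizing matrix $A(\sigma_i^{-s})$.
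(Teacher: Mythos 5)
Your outline is on the right track and matches the paper's strategy: both arguments treat $\sigma_i^{-s}$ as a one-parameter family, observe that $e_1$ is invariant, use continuity to dispose of the strict inequalities in \eqref{assumption 1} and \eqref{assumption 3}, and isolate \eqref{assumption 4} as the only nontrivial condition, splitting on $r(\sigma) > 0$ versus $r(\sigma) = 0$. You also correctly identify $i = 2$ and $i = 4$ as the difficult cases when $r(\sigma) = 0$. (The cases $i = 3$ and $i = 5$ are genuinely easy: the paper's exact difference formulas factor as $-s(\lambda_2 + \lambda_4 + \lambda_5)(s + \lambda_1 - \lambda_3)$ and $s(\lambda_1 + \lambda_5)(s - \lambda_2 - \lambda_3 - \lambda_4 - 2\lambda_5)$, whose signs follow directly from \eqref{assumption 1} and \eqref{assumption 3}.)

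The gap is that you assert, but do not prove, that the derivative is nonnegative in the hard cases, and you suggest it follows from ``the auxiliary inequalities $\lambda_3 > e_1 \geq 0$, $\lambda_1 + \lambda_5 > 0$, $\lambda_2 + \lambda_4 < 0$.'' It does not. For $i = 2$ the relevant quantity is $(\lambda_1 + \lambda_2)(-2\lambda_2 - \lambda_3 - \lambda_4 - \lambda_5)$, so you need $2\lambda_2 + \lambda_3 + \lambda_4 + \lambda_5 \le 0$, which does not follow from the listed inequalities alone. The paper's argument here is the core of the proof: when $r(\sigma) = 0$, one must extract the stronger conclusion $\lambda_2 + \lambda_3 + \lambda_5 < 0$. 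That requires collecting $r(\sigma)$ into the form $r_4(\sigma)\lambda_4 + (\text{rest})$, splitting on whether $r_4(\sigma) = 0$, solving the constraint $r(\sigma)=0$ for $\lambda_1$ or $\lambda_4$ respectively, substituting back into $u(\sigma)$ to produce new factorizations such as
\[
u(\sigma) = -\frac{(\lambda_3 + \lambda_5)(\lambda_2 + \lambda_3)(\lambda_1 + \lambda_5)(\lambda_1 + \lambda_2)}{r_4(\sigma)},
\]
and then invoking $u(\sigma) > 0$ (established inside the proof of Lemma~\ref{lem:nonnegative_pattern1}, not among the listed hypotheses) to pin down the sign of $r_4(\sigma)$ and thence of $\lambda_2 + \lambda_3 + \lambda_5$. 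None of this is routine, and your proposal leaves it entirely to the reader. A secondary, minor point: the paper computes the exact polynomial difference $r(\sigma_i^{-s}) - r(\sigma)$ rather than a first-order Taylor term plus $O(s^2)$; if the linear coefficient were only known to be nonnegative rather than strictly positive, the Taylor form would not settle the sign, whereas the exact factorization does.
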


\begin{proof}
As we wish to prove the result for sufficiently small values of $ s $, we may assume that if $ \lambda_i > \lambda_{i+1} $ then $ \lambda_i - s \geq \lambda_{i+1} $.

Note that $ e_1( \sigma_i^{-s} ) = e_1( \sigma ) \geq 0 $ for any positive real number $ s $ and any $ i \in \{ 2, 3, 4, 5 \} $, so \eqref{trace condition} is valid for $ \sigma_i^{-s} $.

We divide the proof into several cases:
\begin{enumerate}

\item $ i = 2 $.

\begin{enumerate}

\item $ \lambda_2 > \lambda_3 $. \label{case:pattern1_lambda_2>lambda_3}

We have $ \lambda_1 + s > \lambda_2 - s \geq \lambda_3 $ and $ \lambda_5 > -\lambda_1 > -( \lambda_1 + s ) $, so \eqref{assumption 1} is valid for $ \sigma_2^{-s} $.

Since $ \lambda_3 $ is the third element of $ \sigma_2^{-s} $ and $ \lambda_3 > e_1( \sigma ) = e_1( \sigma_2^{-s} ) $ then \eqref{assumption 3} is valid for $ \sigma_2^{-s} $.

If $ r( \sigma ) > 0 $ then, by continuity of $ r( \sigma ) $, we have $ r( \sigma_2^{-s} ) > 0 $ for sufficiently small $ s $. This means \eqref{assumption 4} is valid for $ \sigma_2^{-s} $. Assume $ r( \sigma ) = 0 $. Then \[ r( \sigma_2^{-s} ) = r( \sigma_2^{-s} ) - r( \sigma ) = s \left( \lambda_{{1}} + \lambda_{{2}} \right) \left( -2 \lambda_{{2}} - \lambda_{{3}}-\lambda_{{4}}-\lambda_{{5}}+s \right) . \] As $ s > 0 $ and $ \lambda_1 + \lambda_2 > 0 $ it suffices to prove that $ 2 \lambda_{{2}} + \lambda_{{3}} + \lambda_{{4}} + \lambda_{{5}} \leq 0 $ in order to have $ r( \sigma_2^{-s} ) > 0 $. This will show \eqref{assumption 4} is valid for $ \sigma_2^{-s} $.

Collecting terms we get
\begin{align*}
r( \sigma ) &= r_4( \sigma ) \lambda_{{4}} + \lambda_{{1}}\lambda_{{2}}\lambda_{{3}} + \lambda_{{1}}\lambda_{{3}}\lambda_{{5}} + \lambda_{{2}}\lambda_{{3}}\lambda_{{5}} + \lambda_{{1}}\lambda_{{2}}\lambda_{{5}} \\
&+ \left( \lambda_{{1}} + \lambda_{{2}} + \lambda_{{3}} + \lambda_{{5}} \right)  \left( {\lambda_{{2}}}^{2} + {\lambda_{{5}}}^{2} \right),
\end{align*}
where \[ r_4( \sigma ) = \left( \lambda_{{2}} + \lambda_{{3}} + \lambda_{{5}} \right) \lambda_{{1}} + \lambda_{{3}}\lambda_{{5}} + \lambda_{{2}}\lambda_{{3}} + \lambda_{{2}}\lambda_{{5}} + {\lambda_{{5}}}^{2} + {\lambda_{{2}}}^{2} . \]

Assume $ r_4( \sigma ) = 0 $. If $ \lambda_2 + \lambda_3 + \lambda_5 = 0 $ then
\begin{align} \label{eq:lambda_4_coeff}
\lambda_{{3}}\lambda_{{5}} + \lambda_{{2}}\lambda_{{3}} + \lambda_{{2}}\lambda_{{5}} + {\lambda_{{5}}}^{2} + {\lambda_{{2}}}^{2} = 0 .
\end{align}
Substituting $ \lambda_5 = -\lambda_2 - \lambda_3 $ in \eqref{eq:lambda_4_coeff} we get $ \lambda_{{2}} \left( \lambda_{{2}} + \lambda_{{3}} \right) = 0 $, which is impossible because by~\eqref{assumption 3}, $ \lambda_2 \geq \lambda_3 > 0 $. Therefore, $ \lambda_2 + \lambda_3 + \lambda_5 \neq 0 $ and so,
\begin{align} \label{eq:lambda_1}
\lambda_1 = -{\frac {\lambda_{{3}}\lambda_{{5}} + \lambda_{{2}}\lambda_{{3}} + \lambda_{{2}}\lambda_{{5}} + {\lambda_{{5}}}^{2} + {\lambda_{{2}}}^{2}}{\lambda_{{2}} + \lambda_{{3}} + \lambda_{{5}}}} .
\end{align}
Substituting \eqref{eq:lambda_1} in $ u( \sigma ) $ gives \[ u( \sigma ) = -{\frac {\lambda_{{4}} \left( \lambda_{{3}} + \lambda_{{5}} \right) \left( \lambda_{{2}} + \lambda_{{3}} \right) }{\lambda_{{2}} + \lambda_{{3}} + \lambda_{{5}}}} . \] By~\eqref{assumption 3}, $ \lambda_4 < 0 $, $ \lambda_2 + \lambda_3 > 0 $ and $ \lambda_3 + \lambda_5 < 0 $. By Lemma~\ref{lem:nonnegative_pattern1}, $ u( \sigma ) > 0 $. Therefore, we must have $ \lambda_2 + \lambda_3 + \lambda_5 < 0 $.

Assume $ r_4( \sigma ) \neq 0 $. Then, as we assumed $ r( \sigma ) = 0 $,
\begin{align} \label{eq:lambda_4}
\lambda_4 &= -\frac{\lambda_{{1}}\lambda_{{2}}\lambda_{{3}} + \lambda_{{1}}\lambda_{{3}}\lambda_{{5}} + \lambda_{{2}}\lambda_{{3}}\lambda_{{5}} + \lambda_{{1}}\lambda_{{2}}\lambda_{{5}}}{r_4( \sigma )} \\
&-\frac{\left( \lambda_{{1}} + \lambda_{{2}} + \lambda_{{3}} + \lambda_{{5}} \right)  \left( {\lambda_{{2}}}^{2} + {\lambda_{{5}}}^{2} \right)}{r_4( \sigma )} . \nonumber
\end{align}
Substituting \eqref{eq:lambda_4} in $ u( \sigma ) $ gives
\begin{align} \label{eq:u_sub_lambda_4}
u( \sigma ) = -\frac{ \left( \lambda_{{3}} + \lambda_{{5}} \right)  \left( \lambda_{{2}} + \lambda_{{3}} \right)  \left( \lambda_{{1}} + \lambda_{{5}} \right)  \left( \lambda_{{1}} + \lambda_{{2}} \right) }{r_4( \sigma )} .
\end{align}
By~\eqref{assumption 1} and \eqref{assumption 3}, the left term in the numerator of \eqref{eq:u_sub_lambda_4} is negative and the rest of the terms in the numerator are positive. By Lemma~\ref{lem:nonnegative_pattern1}, $ u( \sigma ) > 0 $. Therefore, we must have that $ r_4( \sigma ) > 0 $. Also, by~\eqref{assumption 3}, $ \lambda_1 + \lambda_2 + \lambda_4 + \lambda_5 < 0 $. Substituting \eqref{eq:lambda_4} in this expression we get
\begin{align} \label{eq:e_1_minus_lambda_3_sub_lambda_4}
\frac{ \left( \lambda_{{1}} + \lambda_{{5}} \right)  \left( \lambda_{{1}} + \lambda_{{2}} \right)  \left( \lambda_{{2}} + \lambda_{{3}} + \lambda_{{5}} \right) }{r_4( \sigma )} < 0 .
\end{align}
As we already know that $ r_4( \sigma ) > 0 $ and that the first two terms of the numerator of \eqref{eq:e_1_minus_lambda_3_sub_lambda_4} are positive, then we must have $ \lambda_2 + \lambda_3 + \lambda_5 < 0 $.

Regardless of the value of $ r_4( \sigma ) $ we have shown that $ \lambda_2 + \lambda_3 + \lambda_5 < 0 $. Therefore, $ 2 \lambda_2 + \lambda_3 + \lambda_4 + \lambda_5 < \lambda_2 + \lambda_4 < 0 $.

\item $ \lambda_2 = \lambda_3 > \lambda_4 $.

This case is covered by case~\ref{case:pattern1_lambda_3>lambda_4}.

\item $ \lambda_2 = \lambda_3 = \lambda_4 > \lambda_5 $.

This case is covered by case~\ref{case:pattern1_lambda_4>lambda_5}.

\item $ \lambda_2 = \lambda_3 = \lambda_4 = \lambda_5 $.

This case is covered by case~\ref{case:pattern1_lambda_5}.

\end{enumerate}

\item $ i = 3 $.

\begin{enumerate}

\item $ \lambda_3 > \lambda_4 $. \label{case:pattern1_lambda_3>lambda_4}

We have $ \lambda_1 + s > \lambda_2 > \lambda_3 - s \geq \lambda_4 $ and $ \lambda_5 > -\lambda_1 > -( \lambda_1 + s ) $, so \eqref{assumption 1} is valid for $ \sigma_3^{-s} $.

Since $ \lambda_3 - s $ is the third element of $ \sigma_3^{-s} $ and $ \lambda_3 > e_1( \sigma ) = e_1( \sigma_3^{-s} ) $ we require that $ s < \lambda_3 - e_1( \sigma ) $, so that \eqref{assumption 3} is valid for $ \sigma_3^{-s} $.

We also have \[ r( \sigma_3^{-s} ) - r( \sigma ) = -s \left( \lambda_{{2}} + \lambda_{{4}} + \lambda_{{5}} \right)  \left( s + \lambda_{{1}} - \lambda_{{3}} \right) . \] We know $ s > 0 $. By~\eqref{assumption 1} $ \lambda_1 \geq \lambda_3 $ and by~\eqref{assumption 3} $ \lambda_{{2}} + \lambda_{{4}} + \lambda_{{5}} < 0 $, so $ r( \sigma_3^{-s} ) - r( \sigma ) > 0 $. This shows \eqref{assumption 4} is valid for $ \sigma_3^{-s} $.

\item $ \lambda_3 = \lambda_4 > \lambda_5 $.

This case is covered by case~\ref{case:pattern1_lambda_4>lambda_5}.

\item $ \lambda_3 = \lambda_4 = \lambda_5 $.

This case is covered by case~\ref{case:pattern1_lambda_5}.

\end{enumerate}

\item $ i = 4 $.

\begin{enumerate}

\item $ \lambda_4 > \lambda_5 $. \label{case:pattern1_lambda_4>lambda_5}

We have $ \lambda_1 + s > \lambda_2 $, $ \lambda_4 - s \geq \lambda_5 $ and $ \lambda_5 > -\lambda_1 > -( \lambda_1 + s ) $, so \eqref{assumption 1} is valid for $ \sigma_4^{-s} $.

Since $ \lambda_3 $ is the third element of $ \sigma_4^{-s} $ and $ \lambda_3 > e_1( \sigma ) = e_1( \sigma_4^{-s} ) $ then \eqref{assumption 3} is valid for $ \sigma_4^{-s} $.

If $ r( \sigma ) > 0 $ then, by continuity of $ r( \sigma ) $, we have $ r( \sigma_4^{-s} ) > 0 $ for sufficiently small $ s $. This means \eqref{assumption 4} is valid for $ \sigma_4^{-s} $. Assume $ r( \sigma ) = 0 $. Then \[ r( \sigma_4^{-s} ) = r( \sigma_4^{-s} ) - r( \sigma ) = -s \left( \lambda_{{2}} + \lambda_{{3}} + \lambda_{{5}} \right)  \left( s + \lambda_{{1}} - \lambda_{{4}} \right) . \] We know $ s > 0 $ and by~\eqref{assumption 1} $ s + \lambda_1 - \lambda_4 > 0 $. In case~\ref{case:pattern1_lambda_2>lambda_3} we already showed that when $ r( \sigma ) = 0 $ we have $ \lambda_{{2}} + \lambda_{{3}} + \lambda_{{5}} < 0 $. Therefore, $ r( \sigma_4^{-s} ) > 0 $. This shows \eqref{assumption 4} is valid for $ \sigma_4^{-s} $.

\item $ \lambda_4 = \lambda_5 $.

This case is covered by case~\ref{case:pattern1_lambda_5}.

\end{enumerate}

\item $ i = 5 $. \label{case:pattern1_lambda_5}

We have $ \lambda_1 + s > \lambda_2 $, and by~\eqref{assumption 1} $ \lambda_5 - s > -( \lambda_1 + s ) $, so \eqref{assumption 1} is valid for $ \sigma_5^{-s} $.

Since $ \lambda_3 $ is the third element of $ \sigma_5^{-s} $ and $ \lambda_3 > e_1( \sigma ) = e_1( \sigma_5^{-s} ) $ then \eqref{assumption 3} is valid for $ \sigma_5^{-s} $.

We also have \[ r( \sigma_5^{-s} ) - r( \sigma ) = s \left( \lambda_{{1}} + \lambda_{{5}} \right)  \left( s - \lambda_{{2}} - \lambda_{{3}} - \lambda_{{4}} - 2 \lambda_{{5}} \right) . \] We know $ s > 0 $. By~\eqref{assumption 1} $ \lambda_1 + \lambda_5 > 0 $ and by~\eqref{assumption 3} $ \lambda_{{2}} + \lambda_{{3}} + \lambda_{{4}} + 2 \lambda_{{5}} < 0 $, so $ r( \sigma_5^{-s} ) - r( \sigma ) > 0 $. This shows \eqref{assumption 4} is valid for $ \sigma_5^{-s} $.

\end{enumerate}
\end{proof}

\begin{theorem} \label{th:small_Guo_pattern2}
Let $ \sigma = \left( \lambda_1, \lambda_2, \dots, \lambda_5 \right) $ be a list of real numbers and $ g \in \mathbb{R} $. Assume $ \sigma $ and $ g $ meet the conditions of Lemma~\ref{lem:nonnegative_pattern2} and also the condition
\begin{equation}
r( \sigma ) < 0 . \label{assumption 7}
\end{equation}
Then, for sufficiently small positive real number $ s $ and $ i \in \{ 2, 3, 4, 5 \} $, there exists $ \tilde{g} \in \mathbb{R} $ (depending on $ s $ and $ i $) such that $ \sigma_i^{-s} $ and $ \tilde{g} $ meet the conditions of Lemma~\ref{lem:nonnegative_pattern2} and \eqref{assumption 7}, and so $ \sigma_i^{-s} $ is realizable by a $ 5 \times 5 $ symmetric nonnegative matrix.
\end{theorem}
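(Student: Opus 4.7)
The plan is to argue by continuity. All the quantities $ e_1 $, $ r $, and the coefficients of $ Q_\sigma $ depend polynomially on $ \sigma $, so for sufficiently small perturbations strict inequalities are preserved and real roots of $ Q_\sigma $ can be tracked continuously. The proof splits naturally into two parts: verifying that $ \sigma_i^{-s} $ satisfies the spectral hypotheses of Lemma~\ref{lem:nonnegative_pattern2} together with~\eqref{assumption 7}, and then producing a suitable real root $ \tilde{g} $ of $ Q_{\sigma_i^{-s}} $ inside $ [0, \tfrac{1}{2} e_1(\sigma)] $.

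For the spectral side, $ e_1(\sigma_i^{-s}) = e_1(\sigma) $ gives~\eqref{trace condition} immediately. The monotonicity~\eqref{monotonicity condition} is verified by the same device used at the start of the proof of Theorem~\ref{th:small_Guo_pattern1} (shrink $ s $ so that no two entries swap order), and the strict inequality~\eqref{assumption 3} follows because the third entry of the sorted $ \sigma_i^{-s} $ stays within $ O(s) $ of some $ \lambda_j \geq \lambda_3 > e_1(\sigma) $. Finally, $ r(\sigma_i^{-s}) < 0 $ for small $ s $ by continuity of $ r $ together with the strict inequality~\eqref{assumption 7} at $ s = 0 $.

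The key step is the construction of $ \tilde{g} $. Direct substitution gives $ Q_\sigma(0) = e_3(\sigma) + e_1(\sigma)\bigl(\lambda_3^2 + \lambda_5^2\bigr) = r(\sigma) $, so the hypothesis $ r(\sigma) < 0 $ forces $ Q_\sigma(0) < 0 $ and in particular $ g > 0 $ strictly. Because $ Q_\sigma $ is a cubic with positive leading coefficient, its smallest positive real root $ g_\ast(\sigma) > 0 $ exists and depends continuously on $ \sigma $ (the strict inequality $ Q_\sigma(0) < 0 $ keeps it bounded away from $ 0 $ under small perturbations). Setting $ \tilde{g} := g_\ast(\sigma_i^{-s}) $ gives $ \tilde{g} > 0 $ close to $ g_\ast(\sigma) \leq g \leq \tfrac{1}{2} e_1(\sigma) $; provided $ g_\ast(\sigma) < \tfrac{1}{2} e_1(\sigma) $ strictly, continuity then yields $ \tilde{g} \leq \tfrac{1}{2} e_1(\sigma) = \tfrac{1}{2} e_1(\sigma_i^{-s}) $ for $ s $ small, which together with the first part completes the hypotheses of Lemma~\ref{lem:nonnegative_pattern2}.

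The main obstacle I anticipate is the borderline case $ g_\ast(\sigma) = \tfrac{1}{2} e_1(\sigma) $, where pure continuity does not pin $ \tilde{g} $ to the correct side of $ \tfrac{1}{2} e_1(\sigma_i^{-s}) $. One then has to either exclude this equality from the hypotheses by computing $ Q_\sigma\bigl(\tfrac{1}{2} e_1(\sigma)\bigr) $ explicitly and showing it is nonzero under the standing assumptions, or pick a different real root of $ Q_{\sigma_i^{-s}} $ for the role of $ \tilde{g} $. A related subtlety is that if $ g $ happens to be a multiple root of $ Q_\sigma $, the perturbation could temporarily push a neighbouring root off the real axis; this is harmless, however, because $ Q_{\sigma_i^{-s}}(0) = r(\sigma_i^{-s}) < 0 $ with positive leading coefficient still forces a real root in $ (0, \infty) $, and tracking the smallest such root recovers $ \tilde{g} $.
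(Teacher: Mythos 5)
Your continuity argument for the non-borderline case $g < \tfrac{1}{2} e_1(\sigma)$ matches the paper's approach, and you correctly isolate the borderline case $g = \tfrac{1}{2} e_1(\sigma)$ as the real difficulty. However, there are two problems.

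First, a minor computational slip: $Q_\sigma(0) = e_3(\sigma) + e_1(\sigma)\bigl(\lambda_3^2 + \lambda_5^2\bigr)$, whereas $r(\sigma) = e_3(\sigma) + e_1(\sigma)\bigl(\lambda_2^2 + \lambda_5^2\bigr)$, so the two are not equal unless $\lambda_2 = \lambda_3$. What actually holds, and what the paper uses, is the inequality $Q_\sigma(0) \leq r(\sigma) < 0$, which follows from $0 < \lambda_3 \leq \lambda_2$ and $e_1(\sigma) \geq 0$. Your conclusion $Q_\sigma(0) < 0$ is still correct, but the stated identity is false.

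Second, and more seriously, neither of your two proposed resolutions of the borderline case $g = \tfrac{1}{2} e_1(\sigma)$ can work, and the case is in fact where all the nontrivial content of the theorem lives. Option (a), excluding the equality by showing $Q_\sigma\bigl(\tfrac{1}{2} e_1(\sigma)\bigr) \neq 0$ under the standing hypotheses, fails because the equality is genuinely compatible with the hypotheses of Lemma~\ref{lem:nonnegative_pattern2} together with \eqref{assumption 7}; the paper does not rule it out but handles it head-on. Option (b), picking a different real root of $Q_{\sigma_i^{-s}}$, also fails: since $\lambda_3 + \lambda_5 < 0$, the cubic $Q_\sigma$ (and, for small $s$, $Q_{\sigma_i^{-s}}$) is convex on $[0,\infty)$ and therefore has at most one real root in $\bigl(0, \tfrac{1}{2} e_1(\sigma)\bigr]$; there is no spare root to fall back on. The paper's resolution is to show directly that the perturbation moves $Q$ upward at the right endpoint, i.e.\ that $Q_{\sigma_i^{-s}}\bigl(\tfrac{1}{2} e_1(\sigma)\bigr) - Q_\sigma\bigl(\tfrac{1}{2} e_1(\sigma)\bigr) > 0$, so that combined with $Q_{\sigma_i^{-s}}(0) < 0$ the intermediate value theorem furnishes a root $\tilde{g} \in \bigl(0, \tfrac{1}{2} e_1(\sigma)\bigr)$. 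Establishing this positivity is done by an explicit case analysis over $i \in \{2,3,4,5\}$, and for $i = 4$ and $i = 5$ it requires first deriving auxiliary sign inequalities from the assumption $Q_\sigma\bigl(\tfrac{1}{2} e_1(\sigma)\bigr) = 0$ before the perturbation difference can be shown positive. That entire computation is absent from your sketch, so the proof has a genuine gap at precisely the point you flagged.
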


\begin{proof}
As we wish to prove the result for sufficiently small values of $ s $, we may assume that if $ \lambda_i > \lambda_{i+1} $ then $ \lambda_i - s \geq \lambda_{i+1} $.

Note that $ e_1( \sigma_i^{-s} ) = e_1( \sigma ) \geq 0 $ for any positive real number $ s $ and any $ i \in \{ 2, 3, 4, 5 \} $, so \eqref{trace condition} is valid for $ \sigma_i^{-s} $.

By our assumption $ r ( \sigma ) < 0 $. Also, by continuity of $ r ( \sigma ) $ in $ \lambda_1 $ and $ \lambda_i $, for sufficiently small positive real number $ s $ we have $ r( \sigma_i^{-s} ) < 0 $. Therefore, \eqref{assumption 7} is valid for $ \sigma_i^{-s} $.

By our assumptions there is $ g \in \left[ 0, \frac{1}{2} e_1( \sigma ) \right] $ such that $ Q_\sigma( g ) = 0 $ and $ B( \sigma, g ) $ is nonnegative. By~\eqref{monotonicity condition}, \eqref{trace condition} and \eqref{assumption 3} we know that $ \lambda_2 \geq \lambda_3 > 0 $. Therefore, by~\eqref{assumption 7} we have \[ e_3( \sigma ) + e_1( \sigma ) \left( {\lambda_3}^2 + {\lambda_5}^2 \right) \leq e_3( \sigma ) + e_1( \sigma ) \left( {\lambda_2}^2 + {\lambda_5}^2 \right) = r ( \sigma ) < 0 , \] which means that the $ Q_\sigma( 0 ) < 0 $. By continuity of polynomials, for sufficiently small positive real number $ s $ and $ i \in \{ 2, 3, 4, 5 \} $, $ Q_{\sigma_i^{-s}}( 0 ) < 0 $.

The inflection point of $ Q_\sigma( z ) $ is at $ \frac{1}{3} \left( \lambda_3 + \lambda_5 \right) $. By~\eqref{monotonicity condition}, \eqref{trace condition} and \eqref{assumption 3}, $ \lambda_3 + \lambda_5 < 0 $ and as the coefficient of $ z^3 $ in $ Q_\sigma( z ) $ is positive, then for $ z \geq 0 $ the polynomial $ Q_\sigma( z ) $ is convex. This means that there can only be a single real root of $ Q_\sigma( z ) $ in the range $ \left( 0, \frac{1}{2} e_1( \sigma ) \right] $.

If $ g < \frac{1}{2} e_1( \sigma ) $ then, by continuity of the roots of a polynomial, for sufficiently small positive real number $ s $ and $ i \in \{ 2, 3, 4, 5 \} $, $ Q_{\sigma_i^{-s}}( z ) $ has a real root $ \tilde{g} \in \left( 0, \frac{1}{2} e_1( \sigma ) \right) $ so \eqref{assumption 6} is valid for $ Q_{\sigma_i^{-s}}( z ) $. We will therefore assume that
\begin{equation}
Q_\sigma \left( \frac{1}{2} e_1( \sigma ) \right) = 0 . \label{assumption 8}
\end{equation}

We divide the proof into several cases:
\begin{enumerate}

\item $ i = 2 $.

\begin{enumerate}

\item $ \lambda_2 > \lambda_3 $. \label{case:pattern2_lambda_2>lambda_3}

We have $ \lambda_1 + s > \lambda_2 - s \geq \lambda_3 $ and $ \lambda_5 \geq -\lambda_1 > -( \lambda_1 + s ) $, so \eqref{monotonicity condition} is valid for $ \sigma_2^{-s} $.

Since $ \lambda_3 $ is the third element of $ \sigma_2^{-s} $ and $ \lambda_3 > e_1( \sigma ) = e_1( \sigma_2^{-s} ) $ then \eqref{assumption 3} is valid for $ \sigma_2^{-s} $.

We have \[ Q_{\sigma_2^{-s}}( z ) - Q_\sigma( z ) = s \left( s + \lambda_{{1}} - \lambda_{{2}} \right)  \left( z - \lambda_{{3}} - \lambda_{{4}} - \lambda_{{5}} \right) . \] By~\eqref{monotonicity condition}, \eqref{trace condition} and \eqref{assumption 3}, $ \lambda_1 \geq \lambda_2 $ and $ \lambda_3 + \lambda_4 + \lambda_5 < 0 $. Together with $ s > 0 $ we get that the above expression is positive for $ z \geq 0 $, and in particular for $ z = \frac{1}{2} e_1( \sigma ) $, so that by~\eqref{assumption 8} \[ Q_{\sigma_2^{-s}} \left( \frac{1}{2} e_1( \sigma ) \right) = Q_{\sigma_2^{-s}} \left( \frac{1}{2} e_1( \sigma ) \right) - Q_\sigma \left( \frac{1}{2} e_1( \sigma ) \right) > 0 . \] Thus, $ Q_{\sigma_2^{-s}}( z ) $ must have a real root $ \tilde{g} \in \left( 0, \frac{1}{2} e_1( \sigma ) \right) $, so \eqref{assumption 6} is valid for $ Q_{\sigma_2^{-s}}( z ) $.

\item $ \lambda_2 = \lambda_3 > \lambda_4 $.

This case is covered by case~\ref{case:pattern2_lambda_3>lambda_4}.

\item $ \lambda_2 = \lambda_3 = \lambda_4 > \lambda_5 $.

This case is covered by case~\ref{case:pattern2_lambda_4>lambda_5}.

\item $ \lambda_2 = \lambda_3 = \lambda_4 = \lambda_5 $.

This case is covered by case~\ref{case:pattern2_lambda_5}.

\end{enumerate}

\item $ i = 3 $.

\begin{enumerate}

\item $ \lambda_3 > \lambda_4 $. \label{case:pattern2_lambda_3>lambda_4}

We have $ \lambda_1 + s > \lambda_2 > \lambda_3 - s \geq \lambda_4 $ and $ \lambda_5 \geq -\lambda_1 > -( \lambda_1 + s ) $, so \eqref{monotonicity condition} is valid for $ \sigma_3^{-s} $.

Since $ \lambda_3 - s $ is the third element of $ \sigma_3^{-s} $ and $ \lambda_3 > e_1( \sigma ) = e_1( \sigma_3^{-s} ) $ we require that $ s < \lambda_3 - e_1( \sigma ) $, so that \eqref{assumption 3} is valid for $ \sigma_3^{-s} $.

We have
\begin{align*}
Q_{\sigma_3^{-s}} \left( \frac{1}{2} e_1( \sigma ) \right) &- Q_\sigma \left( \frac{1}{2} e_1( \sigma ) \right) = \left( \lambda_{{1}} + \lambda_{{3}} \right) s^2 \\
 &+ \frac{1}{2} s \left( \left( \lambda_{{2}} - \lambda_{{3}} + \lambda_{{4}} - \lambda_{{5}} \right) \left( \lambda_{{1}} + \lambda_{{3}} \right) + \left( \lambda_{{2}} + \lambda_{{4}} \right) ^{2} \right) \\
 &+ \frac{1}{2} s \left( \left( \lambda_{{1}} - \lambda_{{5}} \right) \left( \lambda_{{1}} + \lambda_{{5}} \right) + {\lambda_{{1}}}^{2} + \lambda_{{3}} \left( \lambda_{{1}} - \lambda_{{3}} \right)  \right) .
\end{align*}
By~\eqref{monotonicity condition} we have $ \lambda_1 \geq \lambda_2 \geq \lambda_3 \geq \lambda_4 \geq \lambda_5 \geq - \lambda_1 $, and by~\eqref{trace condition} and \eqref{assumption 3} $ \lambda_1 \geq \lambda_3 > 0 $. Together with $ s > 0 $ we get that the above expression is positive. Hence, by~\eqref{assumption 8} \[ Q_{\sigma_3^{-s}} \left( \frac{1}{2} e_1( \sigma ) \right) = Q_{\sigma_3^{-s}} \left( \frac{1}{2} e_1( \sigma ) \right) - Q_\sigma \left( \frac{1}{2} e_1( \sigma ) \right) > 0 , \] and so $ Q_{\sigma_3^{-s}}( z ) $ must have a real root $ \tilde{g} \in \left( 0, \frac{1}{2} e_1( \sigma ) \right) $. Therefore, \eqref{assumption 6} is valid for $ Q_{\sigma_3^{-s}}( z ) $.

\item $ \lambda_3 = \lambda_4 > \lambda_5 $.

This case is covered by case~\ref{case:pattern2_lambda_4>lambda_5}.

\item $ \lambda_3 = \lambda_4 = \lambda_5 $.

This case is covered by case~\ref{case:pattern2_lambda_5}.

\end{enumerate}

\item $ i = 4 $.

\begin{enumerate}

\item $ \lambda_4 > \lambda_5 $. \label{case:pattern2_lambda_4>lambda_5}

We have $ \lambda_1 + s > \lambda_2 $, $ \lambda_4 - s \geq \lambda_5 $ and $ \lambda_5 \geq -\lambda_1 > -( \lambda_1 + s ) $, so \eqref{monotonicity condition} is valid for $ \sigma_4^{-s} $.

Since $ \lambda_3 $ is the third element of $ \sigma_4^{-s} $ and $ \lambda_3 > e_1( \sigma ) = e_1( \sigma_4^{-s} ) $ then \eqref{assumption 3} is valid for $ \sigma_4^{-s} $.

Note that
\begin{align*}
Q_\sigma & \left( \frac{1}{2} e_1( \sigma ) \right) = \frac{1}{2} \lambda_{{2}} \left( \lambda_{{2}} + \lambda_{{5}} \right) \left( \lambda_{{1}} + \lambda_{{2}} + \lambda_{{3}} + \lambda_{{4}} - \lambda_{{5}} \right) \\
&- \frac{1}{2} \left( \lambda_{{1}} - \lambda_{{2}} - \lambda_{{3}} + \lambda_{{4}} - \lambda_{{5}} \right) \left( \lambda_{{1}} + \lambda_{{3}} \right) \left( \lambda_{{3}} + \lambda_{{4}} \right) \\
&+ \frac{1}{4} \left( \lambda_{{1}} - \lambda_{{2}} - \lambda_{{3}} + \lambda_{{4}} - \lambda_{{5}} \right) \left( \lambda_{{1}} + \lambda_{{2}} + \lambda_{{3}} + \lambda_{{4}} - \lambda_{{5}} \right) e_1( \sigma ) .
\end{align*}
If $ \lambda_{{1}} - \lambda_{{2}} - \lambda_{{3}} + \lambda_{{4}} - \lambda_{{5}} \leq 0 $, then together with \eqref{monotonicity condition}, \eqref{trace condition} and \eqref{assumption 3} we get that the above expression is negative. This contradicts \eqref{assumption 8}, and so
\begin{align}
\lambda_{{1}} - \lambda_{{2}} - \lambda_{{3}} + \lambda_{{4}} - \lambda_{{5}} > 0 . \label{positive expression 1}
\end{align}

We have
\begin{align*}
Q_{\sigma_4^{-s}} \left( \frac{1}{2} e_1( \sigma ) \right) &- Q_\sigma \left( \frac{1}{2} e_1( \sigma ) \right) = \\
& \frac{1}{2} s \left( \lambda_{{1}} - \lambda_{{2}} - \lambda_{{3}} + \lambda_{{4}} - \lambda_{{5}} \right)  \left( s + \lambda_{{1}} - \lambda_{{4}} \right) .
\end{align*}
By~\eqref{monotonicity condition}, \eqref{assumption 8}, \eqref{positive expression 1} and the fact that $ s > 0 $ we have \[ Q_{\sigma_4^{-s}} \left( \frac{1}{2} e_1( \sigma ) \right) = Q_{\sigma_4^{-s}} \left( \frac{1}{2} e_1( \sigma ) \right) - Q_\sigma \left( \frac{1}{2} e_1( \sigma ) \right) > 0 . \] Thus, $ Q_{\sigma_4^{-s}}( z ) $ must have a real root $ \tilde{g} \in \left( 0, \frac{1}{2} e_1( \sigma ) \right) $, so \eqref{assumption 6} is valid for $ Q_{\sigma_4^{-s}}( z ) $.

\item $ \lambda_4 = \lambda_5 $.

This case is covered by case~\ref{case:pattern2_lambda_5}.

\end{enumerate}

\item $ i = 5 $. \label{case:pattern2_lambda_5}

We have $ \lambda_1 + s > \lambda_2 $, and by~\eqref{assumption 1} $ \lambda_5 - s \geq -( \lambda_1 + s ) $, so \eqref{monotonicity condition} is valid for $ \sigma_5^{-s} $.

Since $ \lambda_3 $ is the third element of $ \sigma_5^{-s} $ and $ \lambda_3 > e_1( \sigma ) = e_1( \sigma_5^{-s} ) $ then \eqref{assumption 3} is valid for $ \sigma_5^{-s} $.

Note that
\begin{align*}
Q_\sigma & \left( \frac{1}{2} e_1( \sigma ) \right) = \\
& \lambda_{{1}} \lambda_{{2}} \lambda_{{4}} + \frac{1}{4} \left( \lambda_{{1}} + \lambda_{{2}} - \lambda_{{3}} + \lambda_{{4}} - \lambda_{{5}} \right)  \left( {\lambda_{{1}}}^{2} + {\lambda_{{2}}}^{2} - {\lambda_{{3}}}^{2} + {\lambda_{{4}}}^{2} - {\lambda_{{5}}}^{2} \right) .
\end{align*}
By~\eqref{monotonicity condition}, \eqref{trace condition} and \eqref{assumption 3} $ \lambda_{{1}} + \lambda_{{2}} - \lambda_{{3}} + \lambda_{{4}} - \lambda_{{5}} > 0 $, so by~\eqref{assumption 8} we get \[ {\lambda_{{1}}}^{2} + {\lambda_{{2}}}^{2} - {\lambda_{{3}}}^{2} + {\lambda_{{4}}}^{2} - {\lambda_{{5}}}^{2} = - \frac { 4 \lambda_{{1}} \lambda_{{2}} \lambda_{{4}} } { \lambda_{{1}} + \lambda_{{2}} - \lambda_{{3}} + \lambda_{{4}} - \lambda_{{5}} } . \] Therefore, by~\eqref{monotonicity condition}, \eqref{trace condition} and \eqref{assumption 3}
\begin{align}
\lambda_{{2}} \lambda_{{4}} &+ \frac{1}{2} \left( {\lambda_{{1}}}^{2} + {\lambda_{{2}}}^{2} - {\lambda_{{3}}}^{2} + {\lambda_{{4}}}^{2} - {\lambda_{{5}}}^{2} \right) = \nonumber \\
& \qquad - \frac{ \lambda_{{2}} \lambda_{{4}} \left( \lambda_{{3}} - \lambda_{{2}} - \lambda_{{4}} + \lambda_{{1}} + \lambda_{{5}} \right) } { \lambda_{{1}} + \lambda_{{2}} - \lambda_{{3}} + \lambda_{{4}} - \lambda_{{5}} } > 0 . \label{positive expression 2}
\end{align}

We have
\begin{align*}
Q_{\sigma_5^{-s}} \left( \frac{1}{2} e_1( \sigma ) \right) &- Q_\sigma \left( \frac{1}{2} e_1( \sigma ) \right) = \left( \lambda_{{1}} + \lambda_{{5}} \right) {s}^{2} \\
&+ \frac{1}{2} s \left( \lambda_{{1}} + \lambda_{{5}} \right) \left( \lambda_{{1}} + \lambda_{{2}} - \lambda_{{3}} + \lambda_{{4}} - \lambda_{{5}} \right) \\
&+ s \left( \lambda_{{2}} \lambda_{{4}} + \frac{1}{2} \left( {\lambda_{{1}}}^{2} + {\lambda_{{2}}}^{2} - {\lambda_{{3}}}^{2} + {\lambda_{{4}}}^{2} - {\lambda_{{5}}}^{2} \right) \right) .
\end{align*}
By~\eqref{monotonicity condition}, \eqref{trace condition}, \eqref{assumption 3}, \eqref{assumption 8}, \eqref{positive expression 2} and the fact that $ s > 0 $ we have \[ Q_{\sigma_5^{-s}} \left( \frac{1}{2} e_1( \sigma ) \right) = Q_{\sigma_5^{-s}} \left( \frac{1}{2} e_1( \sigma ) \right) - Q_\sigma \left( \frac{1}{2} e_1( \sigma ) \right) > 0 . \] Thus, $ Q_{\sigma_5^{-s}}( z ) $ must have a real root $ \tilde{g} \in \left( 0, \frac{1}{2} e_1( \sigma ) \right) $, so \eqref{assumption 6} is valid for $ Q_{\sigma_5^{-s}}( z ) $.

\end{enumerate}
\end{proof}

\begin{lemma} \label{lem:spectral_radius}
Let $ A $ be a symmetric nonnegative matrix with a spectral radius $ \rho( A ) $. Then the elements of $ A $ are in the range $ [ 0, \rho( A ) ] $.
\end{lemma}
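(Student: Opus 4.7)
The lower bound is immediate: since $A$ is nonnegative, every entry satisfies $a_{ij} \geq 0$. So the entire content of the lemma is the upper bound $a_{ij} \leq \rho(A)$. The plan is to exploit the fact that, for a symmetric matrix, the spectral radius equals the largest eigenvalue (by the spectral theorem, or by Perron--Frobenius applied to the nonnegative symmetric $A$), and then to bound individual entries via the Rayleigh quotient together with Cauchy interlacing on a $2\times 2$ principal submatrix.

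First I would dispose of the diagonal entries. For each index $i$, the standard basis vector $e_i$ is a unit vector, and $e_i^{T} A \, e_i = a_{ii}$. Since $A$ is symmetric, the Rayleigh quotient is maximized by the largest eigenvalue, so $a_{ii} \leq \lambda_{\max}(A) = \rho(A)$. This is the easy half.

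For an off-diagonal entry $a_{ij}$ with $i \neq j$, I would pick out the $2 \times 2$ principal submatrix
\[
B = \begin{pmatrix} a_{ii} & a_{ij} \\ a_{ij} & a_{jj} \end{pmatrix}.
\]
Cauchy's interlacing theorem gives $\lambda_{\max}(B) \leq \lambda_{\max}(A) = \rho(A)$. On the other hand, an explicit computation of the larger eigenvalue of $B$ yields
\[
\lambda_{\max}(B) = \tfrac{1}{2}\bigl(a_{ii} + a_{jj}\bigr) + \tfrac{1}{2}\sqrt{(a_{ii} - a_{jj})^2 + 4 a_{ij}^{2}},
\]
and because $a_{ii}, a_{jj} \geq 0$ and $\sqrt{(a_{ii}-a_{jj})^2 + 4 a_{ij}^2} \geq 2 a_{ij}$, this lower-bounds to $a_{ij}$. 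Chaining the two inequalities gives $a_{ij} \leq \rho(A)$, which completes the argument.

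There is no real obstacle here; the only thing to be careful about is making sure the two ingredients invoked (the Rayleigh quotient bound for the diagonal case and Cauchy interlacing for the off-diagonal case) are stated correctly, and that the elementary estimate $\lambda_{\max}(B) \geq a_{ij}$ uses nonnegativity of the diagonal entries of $B$ in an essential way. If one prefers a one-line alternative, the identity $\lVert A\rVert_2 = \rho(A)$ for symmetric $A$ together with $a_{ij} = e_i^{T} A e_j \leq \lVert A\rVert_2$ gives the upper bound for all entries at once, but the interlacing version above is more elementary and fits the style of the surrounding text.
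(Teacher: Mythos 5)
Your proof is correct, but it takes a genuinely different route from the paper. The paper's proof is a one-shot argument via the operator norm: it uses $\rho(A)=\max_{\|x\|_2=1}\|Ax\|_2$ for symmetric $A$, and then bounds any entry $a_{ij}$ by the Euclidean norm of the $j$-th column, $a_{ij}=\sqrt{a_{ij}^2}\leq\sqrt{\sum_k a_{kj}^2}=\|Ae_j\|_2\leq\rho(A)$, which handles diagonal and off-diagonal entries in a single line. You instead split into two cases and use Rayleigh quotients for the diagonal and Cauchy interlacing on a $2\times 2$ principal submatrix for the off-diagonal, together with the observation that $\lambda_{\max}(A)=\rho(A)$ for a nonnegative symmetric matrix (which, as you note, rests on Perron--Frobenius). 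Your argument is sound --- the explicit eigenvalue of the $2\times 2$ block, the inequality $\sqrt{(a_{ii}-a_{jj})^2+4a_{ij}^2}\geq 2a_{ij}$ using $a_{ij}\geq 0$, and the use of $a_{ii},a_{jj}\geq 0$ are all exactly right --- but it is more machinery for the same conclusion. The ``one-line alternative'' you mention at the end ($a_{ij}=e_i^T A e_j\leq\|A\|_2=\rho(A)$) is essentially the paper's argument in spirit, differing only in whether one bounds via the bilinear form directly or via the column norm; if you wanted to match the paper's style, that is the variant to keep. One small point worth flagging in your interlacing version: the identity $\lambda_{\max}(A)=\rho(A)$ does require nonnegativity of $A$ (not just symmetry), whereas the paper's identity $\rho(A)=\|A\|_2$ holds for all symmetric matrices; this is not a gap in your proof, since you do invoke Perron--Frobenius, but it is an extra hypothesis being consumed that the paper's route avoids.
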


\begin{proof}
It is well-known that for an $ n \times n $ symmetric matrix $ A = ( a_{ij} ) $ we have \[ \rho( A ) = \max_{\|  x \|_2 = 1} \| Ax \|_2 . \] Let $ \mathbf{e}_i $ be the $ i^\text{th} $ unit vector in $ {\mathbb{R}}^n $. Then, as $ A $ is nonnegative, \[ 0 \leq a_{ij} = \sqrt{ {a_{ij}}^2 } \leq  \sqrt{ \sum_{k=1}^{n} {a_{kj}}^2 } = \| A \mathbf{e}_j \|_2 \leq \rho( A ) . \]
\end{proof}

In the next theorem we do not assume that the elements of $ \sigma $ are monotonically decreasing.

\begin{theorem} \label{th:Guo_extension}
Let $ i \in \{ 2, 3, \dots, n \} $ be fixed. Assume that if a list of real numbers $ \sigma = \left( \lambda_1, \lambda_2, \dots, \lambda_n \right) $ is realizable by a nonnegative symmetric matrix with Perron eigenvalue $ \lambda_1 $, then there exists a positive real number $ s_0 $ (depending on $ \sigma $ and $ i $), such that $ \sigma_i^{+s} $ ($ \sigma_i^{-s} $) is realizable by a nonnegative symmetric matrix with Perron eigenvalue $ \lambda_1 + s $ for all $ 0 \leq s < s_0 $. Then $ \sigma_i^{+s} $ ($ \sigma_i^{-s} $) is realizable by a symmetric nonnegative matrix for any positive real number $ s $.
\end{theorem}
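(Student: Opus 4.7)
The plan is a standard continuation/compactness argument on the set of admissible perturbations, powered by Lemma~\ref{lem:spectral_radius} for uniform entry bounds and by the semigroup identity $(\sigma_i^{+s})_i^{+s'} = \sigma_i^{+(s+s')}$ (and its ``$-s$'' analogue). Fix a list $\sigma = (\lambda_1, \ldots, \lambda_n)$ realized by some symmetric nonnegative matrix with Perron eigenvalue $\lambda_1$, and, for the ``$+s$'' version, set
\[
S = \{\, s \ge 0 : \sigma_i^{+s}\text{ is realized by a symmetric nonnegative matrix with Perron eigenvalue }\lambda_1 + s \,\}.
\]
The hypothesis gives an $s_0 > 0$ with $[0, s_0) \subseteq S$, so $S$ is nonempty. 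I will show $S$ is both open and closed in $[0, \infty)$, so by connectedness $S = [0, \infty)$, which is exactly the assertion.

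For openness, suppose $s^* \in S$, realized by some $A^*$ with Perron eigenvalue $\lambda_1 + s^*$. Apply the standing hypothesis to the realized spectrum $\sigma' := \sigma_i^{+s^*}$ with the same fixed index $i$: there exists $s_0' > 0$ such that for every $0 \le s' < s_0'$, the list $(\sigma')_i^{+s'} = \sigma_i^{+(s^*+s')}$ is realized by a symmetric nonnegative matrix whose Perron eigenvalue equals $(\lambda_1 + s^*) + s'$. Hence $[s^*, s^* + s_0') \subseteq S$, which establishes openness of $S$ in $[0,\infty)$.

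The main step is closedness. Take $s_k \in S$ with $s_k \to s^*$ and pick realizing matrices $A_k$ for $\sigma_i^{+s_k}$. By Lemma~\ref{lem:spectral_radius} every entry of $A_k$ lies in $[0, \lambda_1 + s_k]$, so $\{A_k\}$ is uniformly bounded in $\mathbb{R}^{n\times n}$. Bolzano--Weierstrass supplies a convergent subsequence $A_{k_j} \to A^\star$, and $A^\star$ is symmetric and nonnegative by continuity. Since the coefficients of the characteristic polynomial are continuous in the matrix entries and the (unordered) multiset of roots is continuous in those coefficients, the spectrum of $A^\star$ equals $\lim_j \sigma_i^{+s_{k_j}} = \sigma_i^{+s^*}$; continuity of the spectral radius then yields $\rho(A^\star) = \lambda_1 + s^*$, which is therefore the Perron eigenvalue. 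Hence $s^* \in S$, and $S$ is closed.

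Being nonempty, open, and closed in the connected set $[0,\infty)$, the set $S$ equals $[0,\infty)$, proving the theorem in the ``$+s$'' case. The ``$-s$'' case is handled verbatim: the Perron eigenvalue along the whole trajectory is still $\lambda_1 + s$, so Lemma~\ref{lem:spectral_radius} supplies the same uniform bound on entries, and the openness step uses $(\sigma_i^{-s})_i^{-s'} = \sigma_i^{-(s+s')}$. The only nontrivial ingredient is the compactness-plus-continuity argument for closedness; everything else is bookkeeping around the semigroup structure of the perturbation.
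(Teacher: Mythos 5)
Your proof is correct and uses exactly the same ingredients as the paper's: Lemma~\ref{lem:spectral_radius} for the uniform entry bound, Bolzano--Weierstrass plus continuity of the spectrum for the limit, and reapplication of the hypothesis at the limit point. The only difference is cosmetic: you package it as a nonempty-open-closed connectedness argument on $[0,\infty)$, whereas the paper takes $w = \sup W^\pm$ and derives a contradiction, which is the same continuation argument in supremum form.
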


\begin{proof}
We follow the path of Laffey in \cite{RefWorks:66}. Let $ i \in \{ 2, 3, \dots, n \} $ be fixed and let $ \sigma = \left( \lambda_1, \lambda_2, \dots, \lambda_n \right) $ be a fixed realizable spectrum of a nonnegative symmetric matrix. Assume that for some $ r > 0 $, $ \sigma_i^{+r} $ ($ \sigma_i^{-r} $) is not realizable by a symmetric nonnegative matrix. Let \[ W^{\pm} = \left\{ t > 0 \mid \sigma_i^{\pm s} \text{ is realizable for all } s \text{ with } 0 \leq s < t \right\} . \] Note that by assumption, there exists a positive real number $ s_0 $ such that $ s_0 \in W^+ $ ($ s_0 \in W^- $). Also, $ W^+ $ ($ W^- $) is bounded above by $ r $, so it has a supremum, which we denote by $ w $. Obviously, $ w \geq s_0 > 0 $. Let $ \{ w_k \} $  ($ k = 1, 2, \dots $) be a strictly increasing sequence of positive real numbers with limit $ w $. Note that $ \sigma_i^{+w_k} $ ($ \sigma_i^{-w_k} $) is realizable by a symmetric nonnegative matrix, or else $ w \leq w_k $, in contradiction of the definition of the sequence $ \{ w_k \} $. Let $ A_k $ be a symmetric nonnegative matrix having the spectrum $ \sigma_i^{+w_k} $ ($ \sigma_i^{-w_k} $). By Lemma~\ref{lem:spectral_radius} the entries of all $ A_k $ lie in the interval $ [ 0, \lambda_1 + w ] $. But then the sequence $ \{ A_k \} $ must have a convergent subsequence. Let $ A_0 $ be the limit of such a subsequence. Then, by continuity of the spectrum, $ A_0 $ has spectrum $ \sigma_i^{+w} $ ($ \sigma_i^{-w} $). Also, $ A_0 $ is a symmetric nonnegative matrix. Therefore, $ \sigma_i^{+s} $ ($ \sigma_i^{-s} $) is realizable by a symmetric nonnegative matrix for all $ 0 \leq s \leq w $. Also, by our assumption, there exists a positive real number $ s_1 $, such that $ \sigma_i^{+(w+s)} $ ($ \sigma_i^{-(w+s)} $) is realizable by a nonnegative symmetric matrix with Perron eigenvalue $ \lambda_1 + w + s $ for all $ 0 \leq s < s_1 $. We conclude that $ w + s_1 \in W^+ $ ($ w + s_1 \in W^- $), but this contradicts the fact that $ w $ is the supremum of $ W^+ $ ($ W^- $). 
\end{proof}

\begin{theorem} \label{th:Guo_pattern1}
Let $ \sigma = \left( \lambda_1, \lambda_2, \dots, \lambda_5 \right) $ be a list of real numbers. Assume $ \sigma $ meets the conditions of Lemma~\ref{lem:nonnegative_pattern1}. Then $ \sigma_i^{-s} $ is realizable by a $ 5 \times 5 $ symmetric nonnegative matrix for any positive real number $ s $ and any $ i \in \{ 2, 3, 4, 5 \} $.
\end{theorem}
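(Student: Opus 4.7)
The strategy is to combine Theorem~\ref{th:small_Guo_pattern1} (which gives realizability of $\sigma_i^{-s}$ for small $s$) with the bootstrap of Theorem~\ref{th:Guo_extension} (which upgrades small perturbations to arbitrary ones). Observe first that under the hypotheses of Lemma~\ref{lem:nonnegative_pattern1}, condition~\eqref{assumption 1} together with~\eqref{trace condition} forces $\lambda_1 = \max_j |\lambda_j|$, so $A(\sigma)$ has $\lambda_1$ as its Perron eigenvalue and $\{\sigma_i^{-s}\}_{s \ge 0}$ is the correct Guo trajectory to follow.

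To invoke Theorem~\ref{th:Guo_extension} one must verify: for each $w \ge 0$ at which $\sigma_i^{-w}$ is realizable by a symmetric nonnegative matrix (necessarily with Perron eigenvalue $\lambda_1 + w$), there is $s_1 > 0$ with $\sigma_i^{-(w+s)}$ realizable for all $0 \le s < s_1$. I would verify this by splitting on the re-sorted version of $\sigma_i^{-w}$: either (i) it still satisfies the hypotheses of Lemma~\ref{lem:nonnegative_pattern1}---in particular its third-largest element still exceeds $e_1 = e_1(\sigma)$---in which case Theorem~\ref{th:small_Guo_pattern1} applied to $\sigma_i^{-w}$, with index $i'$ the position of $\lambda_i - w$ in the sorted list, yields the required small further perturbation; or (ii) the third-largest element of $\sigma_i^{-w}$ is at most $e_1$, placing $\sigma_i^{-w}$ in the ``known region,'' in which case Theorem~\ref{th:Guo_known_region} directly delivers realizability of $\sigma_i^{-(w+s)}$ for all $s > 0$ in one shot, and the proof is finished.

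The principal obstacle is showing these two cases are exhaustive, i.e., ruling out a realizable $\sigma_i^{-w}$ that remains in the unknown region ($\lambda_3' > e_1$) yet fails~\eqref{assumption 4} (so $r(\sigma_i^{-w}) < 0$). The remedy is to revisit the explicit identities for $r(\sigma_i^{-s}) - r(\sigma)$ derived inside the proof of Theorem~\ref{th:small_Guo_pattern1}: in each of the four cases ($i = 2, 3, 4, 5$) they are shown to be strictly positive for $s > 0$ whenever $r(\sigma) = 0$ and the remaining hypotheses of Lemma~\ref{lem:nonnegative_pattern1} hold. Applying the same identity at each point $\sigma_i^{-w}$ at which $r$ would attempt to vanish shows that $r$ cannot cross from nonnegative to negative along the trajectory while the spectrum remains in the unknown region. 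Consequently, the trajectory stays in the Lemma~\ref{lem:nonnegative_pattern1} regime until the precise moment it crosses~\eqref{assumption 3} into the known region, where Theorem~\ref{th:Guo_known_region} takes over. A minor reindexing case analysis, paralleling those carried out in Theorems~\ref{th:Guo_known_region} and~\ref{th:small_Guo_pattern1}, handles the fact that $\lambda_i - w$ may occupy a different sorted position.
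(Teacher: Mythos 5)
Your overall route matches the paper's: feed Theorem~\ref{th:small_Guo_pattern1} into the bootstrap scheme of Theorem~\ref{th:Guo_extension}, and let Theorem~\ref{th:Guo_known_region} take over once the re-sorted third eigenvalue drops to $e_1(\sigma)$. The ``track the $r$-condition'' argument you sketch is legitimate: since $r\ge 0$ is a non-strict (closed) condition and the proof of Theorem~\ref{th:small_Guo_pattern1} already shows $r(\sigma_i^{-s})-r(\sigma)>0$ whenever $r(\sigma)=0$, the trajectory cannot exit $\{r\ge 0\}$ while it stays in the unknown region.

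However, your case analysis at the supremum point $\tilde\sigma=\sigma_i^{-w}$ is incomplete. You split into (i) all of Lemma~\ref{lem:nonnegative_pattern1}'s hypotheses still hold, and (ii) the third-largest element is $\le e_1$, and then identify the ``obstacle'' as ruling out $\lambda_3'>e_1$ with $r<0$. But the strict inequality $\lambda_5>-\lambda_1$ in~\eqref{assumption 1} is also in danger of degenerating at the limit, and the case $\delta_3>e_1(\tilde\sigma)$, $r(\tilde\sigma)\ge 0$, $\delta_5=-\delta_1$ is not covered by your (i), (ii), or ``obstacle''. This is precisely the third case in the paper's proof, and the paper dispatches it with Lemma~\ref{lem:unrealizable_spectrum}: a spectrum in the unknown region ($\lambda_3>e_1$) with $\lambda_5=-\lambda_1$ cannot be realized by any $5\times 5$ nonnegative matrix, contradicting the realizability of $\tilde\sigma$ by $A_0$. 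You can close the gap either by invoking that lemma, or by the elementary observation that along the trajectory the Perron eigenvalue grows by exactly $s$ while the smallest eigenvalue decreases by at most $s$, so $\delta_1+\delta_5\ge\lambda_1+\lambda_5>0$ — but as written your proposal silently assumes this third possibility away.
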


\begin{proof}
By Theorem~\ref{th:small_Guo_pattern1} we know that $ \sigma_i^{-s} $ is realizable by a $ 5 \times 5 $ symmetric nonnegative matrix for sufficiently small positive real number $ s $ and any $ i \in \{ 2, 3, 4, 5 \} $. We follow the proof of  Theorem~\ref{th:Guo_extension} with the addition of the conditions of Lemma~\ref{lem:nonnegative_pattern1}, which meets the assumptions of Theorem~\ref{th:Guo_extension}. The delicate step in the proof is when looking at the limit $ A_0 $ of a subsequence $ \{ A_k \} $ of $ 5 \times 5 $ symmetric nonnegative matrices, whose spectra meet the conditions of Lemma~\ref{lem:nonnegative_pattern1}. Let $ \tilde{\sigma} = \left( \delta_1, \delta_2, \dots, \delta_5 \right) $ be the spectrum of $ A_0 $. We want to prove that $ {\tilde{\sigma}}_i^{-s} $ is realizable by a $ 5 \times 5 $ symmetric nonnegative matrix for sufficiently small positive real number $ s $ and any $ i \in \{ 2, 3, 4, 5 \} $. If this can be achieved then the rest of the proof of Theorem~\ref{th:Guo_extension} is valid as well.

$ \tilde{\sigma} $ may not meet some conditions of Lemma~\ref{lem:nonnegative_pattern1}. The problematic conditions are the strict inequalities $ \lambda_5 > -\lambda_1 $ and $ \lambda_3 > e_1( \sigma ) $, which may become an equality in the limit. We look at the different cases:
\begin{enumerate}

\item $ \delta_5 > -\delta_1 $ and $ \delta_3 > e_1( \tilde{\sigma} ) $.

In this case $ {\tilde{\sigma}}_i^{-s} $ is realizable by a $ 5 \times 5 $ symmetric nonnegative matrix for sufficiently small positive real number $ s $ and any $ i \in \{ 2, 3, 4, 5 \} $ by Theorem~\ref{th:small_Guo_pattern1}.

\item $ \delta_3 = e_1( \tilde{\sigma} ) $.

In this case $ {\tilde{\sigma}}_i^{-s} $ is realizable by a $ 5 \times 5 $ symmetric nonnegative matrix for any positive real number $ s $ and any $ i \in \{ 2, 3, 4, 5 \} $ by Theorem~\ref{th:Guo_known_region}.

\item $ \delta_5 = -\delta_1 $ and $ \delta_3 > e_1( \tilde{\sigma} ) $.

This case is ruled out by Lemma~\ref{lem:unrealizable_spectrum}, because we know that $ \tilde{\sigma} $ is realizable by the $ 5 \times 5 $ symmetric nonnegative matrix $ A_0 $.

\end{enumerate}
\end{proof}

\begin{theorem} \label{th:Guo_pattern2}
Let $ \sigma = \left( \lambda_1, \lambda_2, \dots, \lambda_5 \right) $ be a list of real numbers and $ g \in \mathbb{R} $. Assume $ \sigma $ and $ g $ meet the conditions of Theorem~\ref{th:small_Guo_pattern2}. Then $ \sigma_i^{-s} $ is realizable by a $ 5 \times 5 $ symmetric nonnegative matrix for any positive real number $ s $ and any $ i \in \{ 2, 3, 4, 5 \} $.
\end{theorem}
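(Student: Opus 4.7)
The plan is to mirror the proof of Theorem~\ref{th:Guo_pattern1}, with Theorem~\ref{th:small_Guo_pattern2} playing the role that Theorem~\ref{th:small_Guo_pattern1} played there. The base case of sufficiently small $s$ is exactly Theorem~\ref{th:small_Guo_pattern2}, and the compactness machinery of Theorem~\ref{th:Guo_extension} together with the uniform entrywise bound of Lemma~\ref{lem:spectral_radius} promotes this to arbitrary $s > 0$. As in Theorem~\ref{th:Guo_pattern1}, the only delicate point is that at a subsequential limit $A_0$ of the realizing matrices, with spectrum $\tilde\sigma = (\delta_1, \dots, \delta_5)$, one must show $\tilde\sigma_i^{-s}$ is itself realizable for all sufficiently small $s > 0$.

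I would choose the approximating realizing matrices $A_k$ for $\sigma_i^{-w_k}$ to be pattern-$B$ matrices $B(\sigma_i^{-w_k}, g_k)$ obtained by iterating Theorem~\ref{th:small_Guo_pattern2} along the path. This is legitimate because the hypotheses of Theorem~\ref{th:small_Guo_pattern2} are shown in its proof to propagate to $\sigma_i^{-s}$, so the iteration is well-defined. Since $\{g_k\}$ is contained in a bounded interval, a subsequence converges to some $\tilde g \in [0, \tfrac{1}{2} e_1(\tilde\sigma)]$; continuity of the coefficients of $Q$ gives $Q_{\tilde\sigma}(\tilde g) = 0$, and continuity of $r$ gives $r(\tilde\sigma) \leq 0$. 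The only hypotheses of Theorem~\ref{th:small_Guo_pattern2} that can fail for $\tilde\sigma$ are therefore the strict inequalities $\delta_3 > e_1(\tilde\sigma)$, $\delta_5 > -\delta_1$, and $r(\tilde\sigma) < 0$.

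I would then handle the degenerations in four cases. If $\delta_3 = e_1(\tilde\sigma)$, then $\tilde\sigma$ lies in the known realizable region $\lambda_3 \leq e_1$, so Theorem~\ref{th:Guo_known_region} realizes $\tilde\sigma_i^{-s}$ for every $s > 0$. If $\delta_5 = -\delta_1$ and $\delta_3 > e_1(\tilde\sigma)$, then Lemma~\ref{lem:unrealizable_spectrum} contradicts the realizability of $\tilde\sigma$ by $A_0$. If $r(\tilde\sigma) = 0$ while $\delta_3 > e_1(\tilde\sigma)$ and $\delta_5 > -\delta_1$, then $\tilde\sigma$ satisfies every hypothesis of Lemma~\ref{lem:nonnegative_pattern1}---its condition \eqref{assumption 4} is exactly $r \geq 0$---so Theorem~\ref{th:Guo_pattern1} supplies $\tilde\sigma_i^{-s}$ for all $s > 0$. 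Finally, if all three strict inequalities survive the limit, Theorem~\ref{th:small_Guo_pattern2} applied to $\tilde\sigma$ and $\tilde g$ closes the induction step.

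The main obstacle will be verifying that the iteration of Theorem~\ref{th:small_Guo_pattern2} is rigorous: its proof must be read carefully to confirm that each hypothesis---monotonicity with $\lambda_5 \geq -\lambda_1$, trace nonnegativity, $\lambda_3 > e_1$, the existence of a valid $g \in [0, \tfrac{1}{2} e_1]$, and $r < 0$---is propagated to $\sigma_i^{-s}$ at every step. Since the proof of Theorem~\ref{th:small_Guo_pattern2} constructs a valid $g$ for $\sigma_i^{-s}$ explicitly in every subcase, I expect this to be a matter of bookkeeping rather than a conceptual issue.
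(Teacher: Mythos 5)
Your proposal follows the paper's own proof in all essentials: the base case comes from Theorem~\ref{th:small_Guo_pattern2}, the passage to arbitrary $s$ uses the compactness argument of Theorem~\ref{th:Guo_extension} (with the set $W^-$ augmented by the conditions of Theorem~\ref{th:small_Guo_pattern2} so that the roots $g_k$ can be extracted and passed to the limit), and the same three-way case split at the limiting spectrum $\tilde\sigma$ is resolved by Theorem~\ref{th:Guo_known_region}, Lemma~\ref{lem:unrealizable_spectrum}, and the pattern-$A$ result. The only cosmetic differences are that you invoke Theorem~\ref{th:Guo_pattern1} where the paper uses the sufficient-for-small-$s$ Theorem~\ref{th:small_Guo_pattern1}, and you make explicit the inequality $r(\tilde\sigma) \leq 0$ that the paper leaves implicit.
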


\begin{proof}
By Theorem~\ref{th:small_Guo_pattern2} we know that $ \sigma_i^{-s} $ is realizable by a $ 5 \times 5 $ symmetric nonnegative matrix for sufficiently small positive real number $ s $ and any $ i \in \{ 2, 3, 4, 5 \} $. We follow the proof of  Theorem~\ref{th:Guo_extension} with the addition of the conditions of Theorem~\ref{th:small_Guo_pattern2}, which meets the assumptions of Theorem~\ref{th:Guo_extension}. The delicate step in the proof is when looking at the limit $ A_0 $ of a subsequence $ \{ A_k \} $ of $ 5 \times 5 $ symmetric nonnegative matrices, whose spectra meet the conditions of Theorem~\ref{th:small_Guo_pattern2} (together with some $ g_k \in \left[ 0, \frac{1}{2} e_1( \sigma ) \right] $). Let $ \tilde{\sigma} = \left( \delta_1, \delta_2, \dots, \delta_5 \right) $ be the spectrum of $ A_0 $. We know that the sequence of polynomials $ Q_{\sigma_i^{-w_k}}( z ) $, each having a real root $ g_k \in \left[ 0, \frac{1}{2} e_1( \sigma ) \right] $, converges to the polynomial $ Q_{\tilde{\sigma}}( z ) $. By continuity of roots of polynomials, $ Q_{\tilde{\sigma}}( z ) $ must have a real root $ \tilde{g} \in \left[ 0, \frac{1}{2} e_1( \sigma ) \right] $. We want to prove that $ {\tilde{\sigma}}_i^{-s} $ is realizable by a $ 5 \times 5 $ symmetric nonnegative matrix for sufficiently small positive real number $ s $ and any $ i \in \{ 2, 3, 4, 5 \} $. If this can be achieved then the rest of the proof of Theorem~\ref{th:Guo_extension} is valid as well.

$ \tilde{\sigma} $ and $ \tilde{g} $ may not meet some conditions of Theorem~\ref{th:small_Guo_pattern2}. The problematic conditions are the strict inequalities $ \lambda_3 > e_1( \sigma ) $ and $ r( \sigma ) < 0 $, which may become an equality in the limit. We look at the different cases:
\begin{enumerate}

\item $ \delta_3 > e_1( \tilde{\sigma} ) $ and $ r( \tilde{\sigma} ) < 0 $.

In this case $ {\tilde{\sigma}}_i^{-s} $ is realizable by a $ 5 \times 5 $ symmetric nonnegative matrix for sufficiently small positive real number $ s $ and any $ i \in \{ 2, 3, 4, 5 \} $ by Theorem~\ref{th:small_Guo_pattern2}.

\item $ \delta_3 = e_1( \tilde{\sigma} ) $.

In this case $ {\tilde{\sigma}}_i^{-s} $ is realizable by a $ 5 \times 5 $ symmetric nonnegative matrix for any positive real number $ s $ and any $ i \in \{ 2, 3, 4, 5 \} $ by Theorem~\ref{th:Guo_known_region}.

\item $ \delta_3 > e_1( \tilde{\sigma} ) $ and $ r( \tilde{\sigma} ) = 0 $.

\begin{enumerate}

\item $ \delta_5 = -\delta_1 $.

This case is ruled out by Lemma~\ref{lem:unrealizable_spectrum}, because we know that $ \tilde{\sigma} $ is realizable by the $ 5 \times 5 $ symmetric nonnegative matrix $ A_0 $.

\item $ \delta_5 > -\delta_1 $.

In this case $ {\tilde{\sigma}}_i^{-s} $ is realizable by a $ 5 \times 5 $ symmetric nonnegative matrix for sufficiently small positive real number $ s $ and any $ i \in \{ 2, 3, 4, 5 \} $ by Theorem~\ref{th:small_Guo_pattern1}.

\end{enumerate}

\end{enumerate}
\end{proof}

\section{Examples} \label{sec:examples}

\begin{example}
Let $ \sigma = \left( 1000, 381, 360, -641, -750 \right) $. Then $ e_1( \sigma ) = 350 $ and $ r( \sigma ) = 306540 $, so $ \sigma $ meets the conditions of Lemma~\ref{lem:nonnegative_pattern1}. Therefore, $ \sigma $ is realizable by the pattern $ A( \sigma ) $. On the other hand, \[ Q_\sigma( z ) = 2 z^3 + 780 z^2 - 169279 z - 5139810 \] has approximately the roots $ -538.3523722, -27.19321311, 175.5455853 $, so it has no root in the range $ \left[ 0, 175 \right] $. Therefore, $ \sigma $ does not meet the conditions of Lemma~\ref{lem:nonnegative_pattern2}. Also, as far we know, no other symmetric realizability criterion is satisfied by $ \sigma $. 
\end{example}

\begin{example}
Let $ \sigma = \left( 1000, 370, 367, -637, -750 \right) $. Then $ e_1( \sigma ) = 350 $ and \[ Q_\sigma( z ) = 2  z^3 + 766 z^2 - 189010 z - 901830 \] has approximately the roots $ -552.5556695, -4.683524431, 174.2391939 $, so its largest root, $ g $, is in the range $ \left[ 0, 175 \right] $. Therefore, $ \sigma $ meets the conditions of Lemma~\ref{lem:nonnegative_pattern2} and is realizable by the pattern $ B( \sigma, g ) $. On the other hand, $ r( \sigma ) = -127980 $, so $ \sigma $ does not meet the conditions of Lemma~\ref{lem:nonnegative_pattern1}. Also, as far we know, no other symmetric realizability criterion is satisfied by $ \sigma $. 
\end{example}

\section*{Acknowledgement}

I wish to thank Raphi Loewy for his helpful comments after reading an earlier draft of this paper and for suggesting simplified proofs for Lemma~\ref{lem:unrealizable_spectrum} and Theorem~\ref{th:Guo_known_region}.

\bibliography{new_realizable_region}

\end{document}